\newtheoremstyle{pak}{9pt}{9pt}{\itshape}{}{\bfseries}{}{.5em}{}
\theoremstyle{pak}
\newtheorem{thm}{Theorem}[section]
\newtheorem{lem}[thm]{Lemma}
\newtheorem{prop}[thm]{Proposition}
\newtheoremstyle{defin}
  {9pt}{9pt}{}{}{\bfseries}{}{.5em}{}
\theoremstyle{defin}
\newtheoremstyle{exm}
  {9pt}{9pt}{}{}{\scshape}{}{.5em}{}
\theoremstyle{exm}
\newtheoremstyle{proof}
  {}{}{}{}{\itshape}{:}{.5em}{}
\theoremstyle{proof}
\def\zz{\mathbb Z}
\def\rr{\mathbb R}
\def\ov{\overline}
\def\cC{\mathcal C}
\def\cD{\mathcal D}
\def\co{\mathcal O}
\def\<{\langle}
\def\>{\rangle}
\def\0{{\mathbf 0}}
\def\.{\hskip.06cm}
\def\T{{{\mathbf{T}}}}
\def\co-NP{\textup{co-NP}}
\def\NP{\textup{NP}}
\def\P{\textup{P}}
\def\SP{\textup{\#P}}
\def\G{\Gamma}
\def\cC{\mathcal{C}}
\def\Z{\mathbb{Z}}
\newcommand{\abs}[1]{\left\lvert#1\right\rvert}
\newcommand{\bs}{\backslash}
\newcommand{\latin}[1]{\textsl{#1}}
\newcommand{\fig}[1]{Figure~\ref{fig:#1}}
\newcommand{\problem}[1]{\textsc{#1}}
\newcommand{\problemdef}[3]{
\medskip
\begin{tabular}{ll}
\multicolumn{2}{l}{\problem{#1}}\\
\textbf{Instance:} & #2 \\
\textbf{Decide:} & #3
\end{tabular}\medskip}
\newcommand{\gadget}[1]{\textup{\textsf{#1}}}
\newcommand{\variable}{\gadget{V}} 
\newcommand{\vertex}{\gadget{V}}
\newcommand{\clause}{\gadget{C}}   
\newcommand{\splitter}{\gadget{Y}}
\newcommand{\hole}{\gadget{H}}
\newcommand{\crossover}{\gadget{X}}
\title[The complexity of generalized domino tilings]{The complexity of generalized domino tilings}
\author[Igor~Pak]{ \ Igor~Pak$^\star$}
\author[Jed~Yang]{ \ Jed~Yang$^\star$}
\thanks{\thinspace ${\hspace{-.45ex}}^\star$Department of Mathematics, UCLA, Los Angeles, CA 90095, USA; \.
\texttt{\{pak,jedyang\}@math.ucla.edu}}
\date{July 4, 1776}
\begin{document}
\date{}

\begin{abstract}
Tiling planar regions with dominoes is a classical problem in which the decision
and counting problems are polynomial.  We prove a variety of hardness results
(both \NP- and \SP-completeness) for different generalizations of dominoes in
three and higher dimensions.
\end{abstract}

\maketitle

\section{Introduction}
The study of \emph{domino tilings} is incredibly rich in its history, applications,
and connections to other fields.
A geometric version of the \emph{perfect matching} problem in grid graphs,
the problem is a special case of a general study of tilings of finite regions,
with its own set of problems and generalization directions,
very different from graph theory.
In this paper we study the decision and the counting problems
for the tilings of a given region.
Both problems have a large literature and long
history in combinatorics and computational complexity, as well as in the
recreational literature, but much of the work concentrated on tilings in the
plane.  Significantly less is known in three and higher dimensions, where
much of the intuition and many tools specific to two dimensions break down.

To summarize the results of this paper in one sentence, we show that the
classical domino tiling problems become computationally hard already in three
dimension, even when the topology of regions is restricted.  This further
underscores the fundamental difficulty of obtaining even the most basic
(positive) tiling results in higher dimensions.  We should mention that
dimension three is critically important for applications ranging from
enumerative combinatorics to probability, to statistical physics,
and to solid-state chemistry (see \latin{e.g.}~\cite{KRS,Ken-dimer,LP,Til}).

\smallskip

The (single tile) \emph{tileability problem} can be stated as follows.  Given a
tile~$T$, decide whether a region~$\G\subset\rr^d$ is tileable with copies of~$T$
(rotations and reflections are allowed).  When $T$ is a domino $[2\times 1]$,
the tileability problem can be solved in polynomial time, via the reduction
to perfect matching~\cite{LP}, and the result generalizes to any~$d\ge 2$.
On the other hand, when $T$ is a bar $[3\times 1]$, the tileability problem
is \NP-complete~\cite{BNRR} in the plane.

Our first result is on the tileability with a brick $T=[2\times2\times1]$, which we call
a \emph{slab}.  Viewed as half-cubes, slabs can be thought of as a natural
generalization of 2-dimensional dominoes.

\begin{thm} \label{thm:slab}
Tileability of $3$-dimensional regions with slabs is \NP-complete.
\end{thm}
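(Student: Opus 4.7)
The plan is first to observe \NP-membership, since a candidate tiling can be verified in polynomial time, and then to establish \NP-hardness by polynomial-time reduction from a planar Boolean satisfiability problem. A natural source is \textsc{Planar 3SAT}: the planar embedding of the formula can be laid out in a slab-like layer of small thickness, while the full third dimension is kept available for routing and for any wire crossings that remain.

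First I would design a \emph{wire} gadget: a long three-dimensional corridor whose only slab tilings fall into exactly two families, interpreted as the Boolean values \textsc{true} and \textsc{false}. The idea is to choose a cross-section of a few unit cells so that, once the first slab at one end is placed in one of two admissible orientations, the rest of the tiling of the corridor is forced and that choice is propagated down its length. Given such a wire, I would build the remaining gadgets: a \emph{variable} gadget \variable\ that selects a single Boolean value and emits it on several outgoing wires; a \emph{splitter} \splitter\ that forks one wire into two carrying the same value, so that a literal can be delivered to several clauses; a \emph{clause} gadget \clause\ whose region is slab-tileable iff at least one of its three incoming wires arrives in the \textsc{true} state; and a \emph{crossover} \crossover\ that uses the extra dimension to route two wires over one another without interaction. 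Connecting the gadgets according to a planar embedding of the formula, and padding empty space with trivially slab-tileable blocks of the form $[2k\times 2\ell\times m]$, produces a region $\Gamma\subset\rr^3$ of size polynomial in the formula.

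Correctness of the reduction reduces to two assertions. First, each gadget must behave as advertised, meaning that its local slab tilings correspond bijectively to the Boolean states I have declared on its ports. Second, the only way to tile $\Gamma$ globally is to combine gadget-local tilings whose port values are consistent across shared boundaries, so that no tiling can ``cheat'' by mixing partial states between gadgets. Both assertions should follow from a careful case analysis of how slabs can meet at each gadget interface; the rigidity of the wire gadget is the key fact that makes the whole assembly propagate information faithfully.

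The main obstacle will be the design of the \clause\ gadget. Because a slab is a bulky tile with only three rotational orientations, encoding a ternary disjunction locally — slab-tileable exactly when at least one of three inputs is \textsc{true} — is delicate, and the gadget must simultaneously avoid introducing unintended tilings in the seven satisfying cases. Getting \clause\ to interface properly with three wires at once, while respecting the rigidity of wires and \splitter s, is where the bulk of the combinatorial work lies. Once it is in place, the \variable\ and \crossover\ gadgets, as well as the overall assembly and the polynomiality of the construction, should follow by routine arguments.
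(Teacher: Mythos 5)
Your overall architecture (membership in \NP{} plus a gadget reduction from a SAT variant, with wires, variable gadgets, and clause gadgets) matches the paper's, but there is a genuine gap exactly where you place the ``bulk of the combinatorial work'': the \clause-gadget with disjunctive (at-least-one-true) semantics is never constructed, and it is not at all clear it can be built along the lines you sketch. The geometry of slabs pushes strongly in a different direction: if three double-layer wires meet at a single cube pair, that central pair is covered by exactly one slab, which lies in exactly one of the three wires --- so the natural local constraint is ``\emph{exactly} one incoming wire is in the positive state,'' not ``at least one.'' The paper exploits this by reducing from \problem{$1$-in-$3$ SAT} (which is both \NP- and \SP-complete), whereupon the clause gadget is literally just three wires joined at a point and requires no case analysis over seven satisfying patterns. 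By insisting on ordinary \textsc{Planar 3SAT} you have made the hardest gadget much harder than it needs to be, and left it unresolved; as written, the proof does not go through.

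Two further points where your sketch underestimates the work. First, a wire does not simply ``propagate'' a Boolean: the relationship between the states at its two endpoints depends on the parity of its length (an even-length wire preserves the phase, an odd-length one flips it), so every wire's length parity must be controlled via the lattice coloring --- this is the paper's key Lemma~\ref{lem:wire} and it does real work in both the clause and the variable-synchronization arguments. Second, your \splitter{} and \crossover{} gadgets are unnecessary in three dimensions: the paper handles multiple occurrences of a variable by placing one \variable-gadget per occurrence and synchronizing them with a cycle of wires of controlled parity, and it routes wires around each other freely in $\rr^3$ without any crossover machinery (crossovers only appear later, in the contractible constructions). I would advise switching the source problem to \problem{$1$-in-$3$ SAT}; the reduction then also becomes parsimonious, which yields the counting result of Theorem~\ref{thm:slab-s} at no extra cost.
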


Now, when the region $\G\subset\rr^2$ is simply connected (s.c.), the tileability problem is
simpler in many cases.  For example, when $T$ is a domino, the s.c.~tileability problem
can be solved in linear time in the area~$|\G|$,
while quadratic for general regions~\cite{Thu} (see also~\cite{Cha}).  Moreover,
when~$T$ is any rectangle, the s.c.~tileability problem is polynomial~\cite{Rem2} (see also~\cite{KK}),
as opposed to \NP-complete for general regions.
Interestingly, this phenomenon does not extend to higher dimensions.

\begin{thm} \label{thm:slab-c}
Tileability of \emph{contractible} $3$-dimensional regions with slabs is \NP-complete.
\end{thm}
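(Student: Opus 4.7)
The plan is to adapt the reduction used for Theorem~\ref{thm:slab}, tightening the construction so that the output region is not just an arbitrary polycube but an actual topological ball. I would reduce from a suitable planar version of 3-SAT, such as planar \problem{Positive 1-in-3-SAT}, which is well known to remain \NP-complete; the planarity of the input formula is what allows us to avoid building handles or voids into the region.

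First, I would design gadgets of bounded size, all embedded in a thin slab of $\rr^3$ of height $2$ or $4$: a variable gadget with exactly two admissible slab tilings (playing the role of TRUE/FALSE), wire and turn gadgets that propagate the chosen tiling along a path, and a clause gadget that admits a slab tiling if and only if the incoming literals satisfy the clause. Each gadget is individually a contractible polycube, and the transition rules are enforced locally by how the slabs must fit against the gadget boundary. The clause gadget is the most delicate, since it must be shaped so that precisely the satisfying combinations of incoming states have a valid slab completion.

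Next, I would assemble the gadgets on a planar layout, placing variables and clauses on a grid in $\rr^2\times\{0\}$ and routing wires between them along grid edges. Because the formula is planar, wires can be drawn without crossings; the few remaining topological obstructions can be resolved by using the third coordinate to lift one wire over another locally, something a plane domino construction cannot do. The whole region is then the union of all gadgets and wires, glued along a common horizontal spine. Since the layout deformation-retracts to a tree-like 2-complex in $\rr^3$ and has no enclosed cavities, it is contractible, not merely simply connected.

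The main obstacle, and the step I expect to be most delicate, is the \emph{local} correctness analysis at every gadget junction. A slab admits far more orientations than a domino, so it is not automatic that the only valid slab tilings of a gadget are the intended ones: in particular, one must rule out spurious tilings that leak a slab across a gadget interface and corrupt the Boolean value being carried by a wire. A careful case analysis at each gadget boundary, together with the verification that the third-dimension crossings can be tiled uniquely once the propagated state is fixed, is what ultimately makes the reduction go through while keeping the final region contractible. A secondary, bookkeeping-level difficulty is ensuring that the vertical detours used for wire crossings do not introduce small holes or handles; this should be handled by always thickening each crossing to a solid block whose slab tilings factor through the intended state.
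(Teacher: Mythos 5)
Your overall direction---a gadget-based parsimonious-style reduction from a $1$-in-$3$ SAT variant, with variable, wire, and clause gadgets built from slab-forced local configurations---matches the spirit of the paper's proof of Theorem~\ref{thm:slab}. But the step where you claim contractibility is where the proposal genuinely breaks down, and it is precisely the step that the paper's proof is designed to overcome. A region obtained by placing gadgets at the vertices of the formula's incidence graph and routing wires along its edges deformation retracts (up to the local gadget geometry) onto that graph. Switching to a \emph{planar} formula removes crossings, but a planar incidence graph is still very far from acyclic: every clause meets three variables and every variable meets several clauses, so the first Betti number is positive and the region is homotopy equivalent to a nontrivial wedge of circles. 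Your assertion that the layout ``deformation-retracts to a tree-like 2-complex'' is therefore false for any nontrivial instance, and no amount of care with the crossings fixes this. A second, smaller omission of the same flavor: the natural slab variable gadget (a $2\times2\times2$ block with two diagonal cubes removed) already fails the paper's definition of contractible, because its \emph{interior} has a point hole (cf.\ \fig{0contract}); this must be repaired explicitly, not just the global loops.

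The missing idea is the paper's ``frozen filler'' technique. One keeps the non-contractible region $\G_0$ from the proof of Theorem~\ref{thm:slab} and enlarges it to $\G\supset\G_0$ by gluing in auxiliary solid pieces (the regions $X$, $Y$, $Z$, $X_k$ in the paper) that plug every loop and every interior point hole, while being \emph{frozen}: they admit exactly one tiling in every tiling of $\G$, which is proved by an inductive argument that repeatedly locates ``isolated'' cubes whose covering slab is forced and peels those slabs off. Because the filler is frozen, tilings of $\G$ biject with tilings of $\G_0$, so hardness is preserved; because the filler kills all the topology, $\G$ is contractible. Without either this mechanism or a concrete alternative for achieving a genuinely ball-shaped region carrying an entire SAT instance, your reduction produces a region that is not contractible, and the theorem is not proved. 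If you want to salvage your planar route, you would still need a loop-free fan-out for a variable's value and a way to close up every variable--clause--variable cycle, which in effect forces you back to something like the frozen-filler construction.
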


For the definition of contractible regions in
higher dimension, generalizing s.c.~regions in the plane, see Section~\ref{s:def}.
Note that in the plane, finding a tile such that the tileability problem of simply
connected regions is \NP-complete remains an open problem (see Section~\ref{s:fin}).

\smallskip

The next set of results concerns \emph{counting problems}: given $\G$, compute the number
of tilings of $\G$ with copies of tile~$T$ (again, rotations and reflections are allowed).
It is well known that in the plane, the number of domino tilings of~$\G$ can be computed
in polynomial time (see \latin{e.g.}~\cite{Ken-dimer,LP}).  This is perhaps surprising, since
computing the number of perfect matchings is classically \SP-complete~\cite{Val}.
The following result is one of the historically first applications of \SP-completeness.

\begin{thm}[Valiant~\cite{Val-alg}] \label{thm:domino}
Number of domino tilings of $3$-dimensional regions is \SP-complete.
\end{thm}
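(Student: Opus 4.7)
The plan is to reduce from a classical \SP-complete problem, namely counting perfect matchings in bipartite graphs (equivalently, the permanent of a $0/1$ matrix), which Valiant proved \SP-complete. Given a bipartite graph $G = (U \sqcup V, E)$, I would construct in polynomial time a $3$-dimensional region $\G$ together with a positive integer $C$ (also computable in polynomial time) such that the number of domino tilings of $\G$ equals $C \cdot \#\text{PM}(G)$. Membership in \SP\ is immediate since a nondeterministic machine can guess a tiling and verify it in polynomial time, so only \SP-hardness requires work.

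The construction uses three kinds of pieces. First, for each vertex $v$ of $G$ we build a small \emph{vertex gadget}: a connected $3$-dimensional subregion with one distinguished ``port'' cell for each edge incident to $v$. The gadget is engineered so that in every valid tiling of $\G$, exactly one port is left uncovered by an internal domino (and thus must be matched by a domino extending into the corresponding edge gadget), matching the constraint that precisely one incident edge is selected at $v$; moreover, for each such choice, the number of internal tilings is a constant $c_v$ depending only on $\deg(v)$. Second, for each edge $uv \in E$ we attach an \emph{edge gadget} (a ``wire''): a thin tube of unit cells running between the corresponding ports of $u$ and $v$. The wire admits exactly two tilings, one propagating the ``active'' state (matching both uncovered ports at its endpoints) and one covering itself entirely by internal dominoes.

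The third ingredient is the handling of crossings, and this is where the third dimension is essential. In the plane, one would need a nontrivial crossover gadget, typically the hardest part of such reductions; in $3$D this is unnecessary, since any two wires can be routed past one another at distinct $z$-levels using short vertical detours that remain tileable with dominoes independently of the state being transmitted. Thus the wires of $\G$ can be laid out following any drawing of $G$ and combined freely without further gadgetry.

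By construction, each perfect matching $M$ of $G$ corresponds exactly to the collection of tilings in which the active wires are the edges of $M$, and the number of such tilings factors as a product of local gadget counts, yielding $C \cdot \#\text{PM}(G)$ in total with $C = \prod_v c_v$. The main obstacle, as in every such gadget reduction, is the careful design of the vertex gadgets so that their internal parity constraints force exactly one active port and that the vertex--wire interfaces propagate states consistently; reducing to the $3$-regular bipartite case first keeps the gadgets of bounded size and simplifies the case analysis. Once these gadgets are exhibited, the correctness and polynomiality of the reduction are routine, and \SP-completeness follows.
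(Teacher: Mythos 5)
Your overall strategy is the paper's: reduce from counting perfect matchings in (cubic) bipartite graphs by drawing the graph in $\Z^3$ with vertex gadgets and wires, using the third dimension to route wires past each other and avoid crossover gadgets. But as written the argument has a genuine gap exactly where you place ``the main obstacle'': you never exhibit the vertex gadget, and that gadget is the entire content of the proof. Without it, nothing forces ``exactly one active port per vertex,'' and nothing guarantees that the internal count $c_v$ is independent of which port is active --- both are needed for your claimed total $C\cdot\#\mathrm{PM}(G)$. A related point you gloss over is wire parity: a wire has the two desired tilings (fully tiled versus interior-only tiled) only if it has an even number of cells; an odd-length wire forces exactly one endpoint to be covered internally and breaks the two-state behaviour, so the lengths must be controlled in the embedding.

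The paper closes this gap with an observation that makes the machinery collapse: the vertex gadget is a \emph{single cube}. Each vertex is one lattice cell from which its three wires emanate; the domino covering that cell lies in exactly one incident wire, which is precisely the ``exactly one active port'' condition, with no internal tilings to count ($c_v=1$). Bipartiteness lets one place the two vertex classes on cells of opposite checkerboard parity, so every wire has even length and hence exactly two tilings, one per state, each unique. The resulting reduction is parsimonious --- the number of tilings equals the number of perfect matchings, i.e.\ $C=1$ --- which is stronger than your multiplicative version and is what later lets the authors reuse the same construction for contractible regions and for the slab results. So: right plan and right high-level picture, but the one nontrivial construction is missing from your write-up, and it admits a much simpler solution than your proposal anticipates.
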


Let us note that Valiant's result is strongly related,
but slightly different (see Subsection~\ref{ss:fin-val}). 
We re-prove Valiant's theorem both for completeness and as a stepping stone towards stronger results.  
The proof uses a reduction from the perfect matching problem in cubic bipartite graphs.  
This construction allows us to prove the following far-reaching extension of Theorem~\ref{thm:domino}.

\begin{thm} \label{thm:domino-c}
Number of domino tilings of \emph{contractible} $3$-dimensional regions is \SP-complete.
\end{thm}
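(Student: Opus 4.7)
The plan is to start from the reduction behind Theorem~\ref{thm:domino}, from $\SP$-\textsc{PerfectMatching} on cubic bipartite graphs to counting domino tilings of 3-dimensional regions, and to engineer the construction so that the resulting region $\G$ is contractible. The extra work, relative to Theorem~\ref{thm:domino}, is purely topological: one must kill the $1$-cycles and $2$-cycles that arise when the input graph is not a tree, without corrupting the bijection between domino tilings of $\G$ and perfect matchings of the input.

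First I would set up the underlying reduction. For a cubic bipartite graph $H=(V,E)$, I place an axis-aligned \emph{vertex gadget} $\vertex_v$ inside a fine 3D grid for each $v\in V$ and connect $\vertex_u$ to $\vertex_v$ by a tube-like \emph{edge gadget} $\variable_e$ for each $e=uv\in E$. I would import from the proof of Theorem~\ref{thm:domino} gadgets with the following property: each $\variable_e$ admits exactly two local states (matched vs.\ unmatched), each with the same fixed number of interior completions, and each $\vertex_v$ forces exactly one of its three incident edges into the matched state. Under these specifications the number of domino tilings of the backbone region is $c \cdot m(H)$ for an explicit constant $c$, where $m(H) = |\operatorname{PM}(H)|$.

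The new ingredient is to make the region contractible. In the na\"ive layout, the backbone region deformation-retracts to $H$ and so has the homotopy type of a bouquet of circles. I would fix this in two stages. First, I route the edge gadgets inside a thin slab $[0,N]\times[0,N]\times[0,3]$, using the third coordinate to avoid crossings between different $\variable_e$; this is straightforward for cubic graphs with bounded-degree vertex gadgets. Second, for every independent $1$-cycle of the resulting frame I attach a \emph{filler block}: a thin 3-dimensional slab whose domino tiling is forced (unique), attached along an annular neighbourhood of the cycle so as to fill the cycle by a $2$-disk. Any residual $2$-cycles (enclosed voids) are filled analogously by uniquely-tileable $3$-ball plugs. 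Since each filler has exactly one domino tiling, attachment multiplies the total count by $1$; after all fillers are attached, $\G$ is simply connected with no enclosed voids, hence contractible in $\rr^3$.

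The main obstacle is the interface between the filler blocks and the rest of the construction. Any domino crossing a face shared between a filler and a neighbouring $\vertex_v$ or $\variable_e$ must be forced by the filler alone, so that the filler contributes a factor of exactly $1$ and does not secretly couple to the matching state of an adjacent edge gadget. One way to arrange this is to design the mating faces to contain only cells of a single colour class in the bipartite 3-coloring of the grid, so that the forced dominoes inside the filler arrive axially and terminate flush with the gadget boundary. Once this interface condition is verified gadget by gadget, the number of domino tilings of $\G$ is exactly $c \cdot m(H)$, and Theorem~\ref{thm:domino-c} follows from the $\SP$-hardness of perfect matching on cubic bipartite graphs.
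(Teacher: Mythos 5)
Your high-level strategy is the same as the paper's: take the lattice-drawing reduction of Theorem~\ref{thm:domino} and adjoin uniquely-tiled (``frozen'') material that kills the topology without changing the number of tilings. The problem is that essentially all of the content of Theorem~\ref{thm:domino-c} lies in actually constructing that material, and your proposal asserts its existence rather than providing it. First, ``a thin 3-dimensional slab whose domino tiling is forced (unique)'' is not something that exists generically: a flat rectangular plate has exponentially many domino tilings, and the paper has to design its plate with offset columns of even length producing jagged borders (Figure~\ref{fig:2plate}), plus additional ``tension lines'' flanking every wire, precisely so that uniqueness holds. Second, and more importantly, unique tileability of a filler \emph{in isolation} is not what is needed; one needs the filler to be frozen inside the full region $\Gamma$, i.e.\ that no tiling of $\Gamma$ can retile the filler or exchange dominoes across the interface. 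The paper proves this by an inductive ``isolated cube'' removal argument, and it is exactly this argument that dictates the shape of its \vertex-, \hole-, \splitter- and \crossover-gadgets; note also that in the paper's construction dominoes \emph{do} cross between the frozen plate and the layers carrying the wires, but in a forced way, so ``no crossing'' is not how the interface problem is ultimately resolved there.

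The one concrete mechanism you offer, making all filler cells that meet the rest of the region lie in a single colour class, is a genuine parity obstruction to crossing dominoes when the filler is colour-balanced, but its geometric realization conflicts with your topological goal. Cells laterally adjacent to a wire alternate in colour along the wire, so a single-colour interface forces you to omit every other contact cell; in a thin filler these omitted unit cubes become punctures that reintroduce nontrivial loops, and thickening the filler to contract such loops creates new adjacency and interior issues that must be re-checked against the paper's strict notion of contractibility (both the closed region \emph{and} its interior must be contractible, cf.\ Figure~\ref{fig:0contract}) --- a condition your ``simply connected with no enclosed voids'' gloss does not capture. Likewise, ``fill each independent $1$-cycle by a disk'' and ``plug residual voids'' presuppose that the many fillers can be placed disjointly, hug the wires and the \vertex-gadgets, stay uniquely tileable, and not interact with the two local states of each edge gadget; none of this is verified, and verifying it is, in effect, the entire proof in the paper (global frozen plate, tension lines, crossover routing via a wiring diagram, and the inductive frozenness lemma). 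As it stands the argument has a genuine gap at its central step.
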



\smallskip

We conclude with the following counting version of theorems~\ref{thm:slab} and~\ref{thm:slab-c}.
This is obtained essentially for free
by using the same construction as in the proof of the theorems.

\begin{thm} \label{thm:slab-s}
Number of slab tilings of $3$-dimensional regions is \SP-complete.
Moreover, the result holds when considering only contractible regions.
\end{thm}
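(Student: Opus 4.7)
\begin{skt}
The plan is to reuse the gadget construction underlying Theorems~\ref{thm:slab} and~\ref{thm:slab-c}, but to replace the source problem by an appropriate \SP-complete one and verify that the reduction is parsimonious. Following the pattern already established for dominoes in Theorems~\ref{thm:domino} and~\ref{thm:domino-c}, the natural source is counting perfect matchings in cubic bipartite graphs, which is \SP-complete by Valiant's theorem.

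Concretely, I would instantiate the reduction from Theorem~\ref{thm:slab} (respectively Theorem~\ref{thm:slab-c}) on the same class of combinatorial instances used to witness its \NP-hardness, obtaining a region $\Gamma$ (respectively, a contractible region) built from variable, clause, and wire gadgets. Each gadget has a small number of ``signal states'' at its boundary interfaces, and the global slab tilings of $\Gamma$ correspond to globally consistent assignments of signals, \latin{i.e.}~to solutions of the source instance. The first step is to verify that for every valid signal configuration, each gadget admits exactly one internal slab tiling---or, more generally, a number of tilings depending only on the gadget type and its signal state, not on the input. If this count is nonconstant across states, one attaches rigid filler regions or equalizer pads (forced to a unique slab tiling) to homogenize it, a standard move in the \SP-completeness literature.

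The main obstacle is the parsimony check itself: for slabs, local tileability is considerably more flexible than for dominoes, so one must be careful that a gadget does not acquire ``parasitic'' internal tilings that break the bijection. I expect this to be routine but not automatic---each gadget used in the \NP-hardness proof has to be inspected and, if necessary, padded; this is the hidden work behind the phrase ``essentially for free'' in the statement. Once parsimony is secured, the number of slab tilings of $\Gamma$ equals $C \cdot N$ for a constant $C$ computable in polynomial time from the input and $N$ the number of perfect matchings in the source graph, yielding the claimed \SP-completeness.

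For the contractible version, the topological reshaping used in the proof of Theorem~\ref{thm:slab-c} modifies only the ambient region, not the interior tile combinatorics of the gadgets, so the parsimonious structure is preserved and we obtain \SP-completeness already for contractible $3$-dimensional regions.
\end{skt}
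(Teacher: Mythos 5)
Your overall strategy---reuse the construction behind Theorems~\ref{thm:slab} and~\ref{thm:slab-c} and verify that the reduction is parsimonious---is exactly the paper's, and your treatment of the contractible case (the enlargement only adds a frozen subregion, so the bijection persists) is also correct. However, there is a concrete misstep in your choice of source problem. The reduction of Theorem~\ref{thm:slab} takes as input a \problem{$1$-in-$3$ SAT} expression, not a graph: its gadgets are variable and clause gadgets, and the tilings of the resulting region $\G$ encode truth assignments. You cannot ``instantiate'' that reduction on a cubic bipartite graph, so reaching for Valiant's theorem on counting perfect matchings does not connect to the construction you intend to reuse; counting perfect matchings is the source for the \emph{domino} results (Theorems~\ref{thm:domino} and~\ref{thm:domino-c}), which use entirely different gadgets. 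The ingredient you are missing is that the counting version of \problem{$1$-in-$3$ SAT} is itself \SP-complete (Theorem~\ref{thm:1-in-3}); with that in hand, no change of source problem is needed and your plan closes.

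Second, your concern about parasitic internal tilings and ``equalizer pads'' is already discharged by the proof of Theorem~\ref{thm:slab}: that proof establishes an exact bijection between slab tilings of $\G$ and $1$-in-$3$ satisfying assignments of~$\cC$ (each \variable-gadget admits exactly two local tilings, one per phase, and the wires and \clause-gadgets are then forced), so the count equals $N$ exactly rather than $C\cdot N$ for a normalizing constant, and no homogenization is required. Once the source is taken to be \problem{$1$-in-$3$ SAT} and its \SP-completeness is invoked, the theorem does follow ``essentially for free,'' for both general and contractible regions.
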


Much of the rest of the paper consists of proofs of these results.  In Section~\ref{s:gen},
we present generalizations to dimensions~$d\ge 4$.  We conclude with final remarks and
open problems in Section~\ref{s:fin}, where we also continue a historical discussion
of these and related tiling results.

\bigskip

\section{Definitions and basic results}\label{s:def}

Consider $\Z^3$ as the union of closed unit cubes in~$\rr^3$ centered at the integer lattice points.
We will suggestively refer to the elements of~$\Z^3$ as \emph{cubes}.
A \emph{region} $\G$ is a finite subset of~$\Z^3$,
and is naturally identified with $\ov\G\subset\rr^3$, the union of the corresponding closed unit cubes.
We say that a region $\G\subset\Z^3$ is \emph{contractible} if $\ov\G\subset\rr^3$ is contractible and has contractible interior.
\begin{figure}[hbtp]
   \includegraphics[scale=0.3]{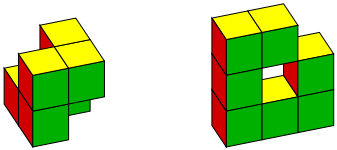}
   \caption{Examples of non-contractible regions of~$\Z^3$.}
   \label{fig:0contract}
\end{figure}
Neither of the requirements is redundant.
For example, the set on the left in \fig{0contract} consisting of six closed cubes is contractible but its interior is not, since the center becomes a hole.
The region on the right with seven closed cubes is not contractible but its interior is.
We wish to be strict and consider these as non-contractible.

Let $T$ be a \emph{tile}, which is simply a contractible region.
A \emph{$T$-tiling} of a region $\G$ is a partition of $\G$ into pairwise disjoint (as subsets of $\Z^3$) isomorphic copies of~$T$.
Here we allow $T$ to be rotated and reflected---though all the tiles we will consider are bricks, which have mirror symmetries.
When the tile $T$ is understood, we simply call it a tiling.
This leads us to the following two decision problems:

\problemdef{$T$-Tileability}
{A region~$\G$.}
{Does the region $\G$ admit a $T$-tiling?}

\problemdef{Contractible $T$-Tileability}
{A contractible region~$\G$.}
{Does the region $\G$ admit a $T$-tiling?}

\noindent
To analyze the complexities of these problems,
we will follow the usual line of attack by embedding known problems as tiling problems.
As such, let us define two more decision problems we will use.
A graph is \emph{cubic} if each vertex has degree~$3$.
A \emph{perfect matching} is a collection of pairwise-disjoint edges that cover all the vertices.

\problemdef{Cubic Bipartite Perfect Matching}
{A cubic bipartite graph~$G$.}
{Does the graph $G$ admit a perfect matching?}

\noindent
Suppose we are given a set of boolean variables.
A \emph{literal} is a variable or the negation thereof.
A \emph{$3$-SAT expression~$\cC$} is a conjunction of clauses,
each a disjunction of three literals.
We will write $\cC=\{C_1,\ldots,C_t\}$ as a set of triples $C_i$ of literals, where the negation of a variable $v$ is denoted~$\ov v$.
An assignment of boolean values to the variables is \emph{$1$-in-$3$ satisfying} if each clause $C_i$ has precisely one true literal.
This leads to the following natural decision problem:

\problemdef{$1$-in-$3$ SAT}
{A $3$-SAT expression~$\cC$.}
{Does the expression $\cC$ admit a $1$-in-$3$ satisfying assignment?}

\noindent
For each decision problem, we also have an associated counting problem,
where we count the number of witnesses.
In particular, for the problems above,
we need to count the number of tilings, perfect matchings, and satisfying assignments, respectively.
By abuse of language, we say a decision problem is \SP-complete if its associated counting problem is \SP-complete.
We reduce the tiling problems from the following two well-known problems.

\begin{thm}[\cite{DL}]
\problem{Cubic Bipartite Perfect Matching} is \SP-complete.
\end{thm}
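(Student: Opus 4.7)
The plan is to reduce from the classical $\SP$-completeness of the permanent of a $0/1$ matrix (Valiant), which is the same as counting perfect matchings in an arbitrary bipartite graph. Given a bipartite graph $G=(A\sqcup B,E)$, I would construct in polynomial time a cubic bipartite graph $G'$ and a positive integer $c$ computable directly from $G$ such that the number of perfect matchings satisfies $\mu(G')=c\cdot\mu(G)$; the answer for $G$ is then recovered by a single division, which is polynomial in the bit length.

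The construction is vertex-by-vertex gadget replacement. For each vertex $v$ of degree $d>3$, I would substitute a bipartite \emph{splitter gadget} $H_v$ on $O(d)$ vertices, with $d$ designated port vertices that inherit the edges formerly incident to~$v$. The defining combinatorial property of $H_v$ is that, for every subset $S$ of its ports, the number of perfect matchings of $H_v$ that leave precisely the vertices of $S$ unmatched equals~$1$ when $|S|=1$ and $0$ otherwise. This enforces the original "exactly one incident edge of $v$ is used" constraint with no extra multiplicity. For each vertex of degree less than~$3$ (including, if needed, the ports themselves after gluing), I would attach a small pendant gadget with a unique perfect matching whose only role is to raise the degree to~$3$ while contributing a predictable multiplicative factor to~$\mu(G')$.

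The remaining tasks are verifications: (i)~$G'$ is bipartite, since each gadget is constructed respecting the bipartition class of the vertex it replaces; (ii)~every vertex of $G'$ has degree exactly~$3$, both for internal vertices of the gadgets and at the interface after gluing; and (iii)~$\mu(G')=c\cdot\mu(G)$, where $c$ is the product over all pendant gadgets of their unique-matching multiplicities. Each step is a direct bookkeeping exercise, and the overall construction is clearly polynomial-time. Membership in $\SP$ is immediate, since a matching can be guessed and verified in polynomial time.

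The main obstacle is the splitter gadget. Its specification combines four simultaneous constraints---unweightedness, bipartiteness, cubicity at every internal vertex, and the exact "choose-one-port" matching profile---any three of which are easy to meet, but all four together are quite rigid. Naive attempts either produce port-usage counts that depend on $|S|$ in an uncontrolled way, create degree-$2$ or degree-$4$ vertices at the interface, or break bipartiteness. The construction in~\cite{DL} assembles such a gadget inductively from small building blocks with the prescribed matching polynomial; verifying correctness is a case analysis over matchings passing through the interface, and this is the technical heart of the argument.
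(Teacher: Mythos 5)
This statement is not proved in the paper at all: it is quoted from Dagum--Luby~\cite{DL}, so there is no in-paper argument to compare against, and your proposal has to stand on its own. As it stands, it does not: the entire technical content is delegated to the splitter gadget, which you do not construct (you yourself defer it to~\cite{DL}), and in fact the gadget as you have specified it cannot exist for $d>3$. Count edges across the bipartition of $H_v$: the $d$ ports lie on the side of $v$ and each carries exactly one inherited external edge, so cubicity of $G'$ forces internal degree $2$ at ports and $3$ elsewhere, giving $3|A_H|-d=3|B_H|$, i.e.\ $d=3(|A_H|-|B_H|)$. On the other hand, a matching of $H_v$ that covers every internal vertex and all ports except a single one requires $|A_H|-|B_H|=1$. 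Together these force $d=3$, so for every vertex that actually needs splitting the four constraints (unweighted, bipartite, cubic at the interface, ``choose-one-port'' profile) are not merely rigid but contradictory. The standard path/chain splitter does realize the matching profile, but only at the cost of degree-$2$ vertices, which is exactly the regime your specification excludes.

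Your proposed repair---pendant gadgets that raise deficient degrees to $3$ while contributing a fixed multiplicative factor---runs into the same obstruction and a new one. A gadget attached by a single edge, internally cubic except at its attachment vertex, and possessing a perfect matching would need $3p-1=3p$ edges counted from the two sides, so no such gadget exists; hence pendants must attach along several edges (typically to deficient vertices on both sides), and then a perfect matching of $G'$ may legitimately use attachment edges, covering ports or path vertices from inside the pendant. At that point the identity $\mu(G')=c\cdot\mu(G)$ is no longer ``direct bookkeeping'': it is precisely the statement that the padded, regularized graph has its matchings in controlled correspondence with those of $G$, which is the theorem's actual content. So the proposal is an outline of a plausible strategy (reduce from the $0/1$ permanent, split high-degree vertices, pad to $3$-regularity), but the key lemma is assumed rather than proved, and its literal formulation needs to be weakened (e.g., uniform rather than unit completion counts, ports with extra controlled external connections, or an interpolation/weight-removal step) before any construction could satisfy it.
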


\begin{thm}[\cite{Sch,CH}] \label{thm:1-in-3}
\problem{$1$-in-$3$ SAT} is \NP-complete and \SP-complete.
\end{thm}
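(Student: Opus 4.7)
The plan is to prove both hardness claims in tandem by exhibiting a polynomial-time, parsimonious reduction from \problem{$3$-SAT} to \problem{$1$-in-$3$ SAT}. Since \problem{$3$-SAT} is classically \NP-complete and \SP-complete, both halves of the theorem will then follow at once. Membership of \problem{$1$-in-$3$ SAT} in \NP, and of its counting version in \SP, is immediate: given a candidate assignment one can check in polynomial time that exactly one literal of each clause is true.

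The core of the reduction is a local \emph{clause gadget}. For each $3$-SAT clause $C = (\ell_1 \vee \ell_2 \vee \ell_3)$ I would introduce a constant number of fresh auxiliary variables together with a small set of $1$-in-$3$ clauses on the literals $\ell_1,\ell_2,\ell_3$ and the new variables, designed so that: for each of the seven assignments to $(\ell_1,\ell_2,\ell_3)$ that satisfies~$C$ there is exactly one $1$-in-$3$ satisfying extension to the auxiliary variables, while the assignment making all three $\ell_i$ false admits no extension. Since the formulation of \problem{$1$-in-$3$ SAT} used here already allows negated literals, no separate negation-handling gadget is required, and clause gadgets for different clauses are glued together simply by sharing the underlying $3$-SAT variables.

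The main obstacle is verifying that the gadget is truly \emph{parsimonious}, as required for \SP-hardness. A non-parsimonious clause gadget already suffices for \NP-hardness (this is essentially Schaefer's~\cite{Sch} construction), but if some satisfying parent assignment admits more than one completion, the resulting count of $1$-in-$3$ solutions no longer determines the count of $3$-SAT solutions. Verifying parsimony reduces to a finite case analysis per gadget: one enumerates the $2^k$ truth assignments to the $k$ auxiliary variables and, for each of the eight parent assignments to $(\ell_1,\ell_2,\ell_3)$, counts $1$-in-$3$ satisfying extensions, checking that the count is $1$ or $0$ according to whether the parent assignment satisfies~$C$; this is the refinement carried out in Creignou--Hermann~\cite{CH}. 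Once the gadget is validated, composing it over the clauses of a $3$-SAT instance yields a $1$-in-$3$ SAT instance whose number of satisfying assignments equals that of the input, completing both the \NP-completeness and \SP-completeness proofs.
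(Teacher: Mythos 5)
First, note that the paper does not prove this statement at all: Theorem~\ref{thm:1-in-3} is imported as a black box from Schaefer~\cite{Sch} (for \NP-completeness) and Creignou--Hermann~\cite{CH} (for \SP-completeness), so there is no in-paper argument to compare yours against. Your sketch is a reasonable outline of how such a proof goes, and the easy parts (membership in \NP{} and \SP, gluing gadgets by sharing variables, handling negation via literals) are fine.

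There is, however, a genuine gap at the one point where all the content lives: you never exhibit the parsimonious clause gadget, and the natural candidate fails the very case analysis you propose. Schaefer's gadget replaces a clause $(x\vee y\vee z)$ by $R(\ov x,a,b)\wedge R(y,b,c)\wedge R(\ov z,c,d)$ with fresh $a,b,c,d$, where $R$ denotes the $1$-in-$3$ constraint. This correctly kills the all-false assignment and admits extensions exactly for the seven satisfying ones, but the parent assignment $x=z=\textup{true}$, $y=\textup{false}$ has \emph{two} extensions (namely $b=1,a=c=0,d=1$ and $a=c=1,b=d=0$), while the other six have one each. So ``enumerate the $2^k$ assignments and check the count is $0$ or $1$'' does not validate the standard gadget; you need a genuinely different construction, and its existence is the whole theorem. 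One does exist: for instance, $R(\ell_1,\ov c,\gamma)\wedge R(\ell_2,\ov c,\delta)\wedge R(\gamma,\delta,\epsilon)\wedge R(\ov c,\ov{\ell_3},d)$ with fresh $c,\gamma,\delta,\epsilon,d$ perfectly implements $c\leftrightarrow(\ell_1\vee\ell_2)$ followed by $c\vee\ell_3$, with all auxiliaries uniquely determined on every satisfying parent assignment and no extension of the all-false one; checking this is the finite verification you describe. Separately, be aware that \cite{CH} establishes \SP-completeness in the non-affine case via counting (Turing) reductions rather than by a parsimonious reduction from \problem{$\#3$-SAT}, so attributing your specific parsimonious gadget to that reference is not accurate; for the purposes of this paper a Turing reduction would suffice anyway, since the subsequent reductions from \problem{$1$-in-$3$ SAT} to tiling are parsimonious and compose with it.
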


For convenience and without loss of generality, we will assume that each
variable~$v_k$ appears unnegated somewhere in the boolean expression.
Indeed, if a variable occurs only negated, we may simply replace it by its negation.
If $v_k$ does not occur, add the clause $(v_k,\ov v_k,f)$, where $f$ is a new variable.
The number of satisfying assignments is obviously preserved.

\bigskip

\section{Proof of Theorem~\ref{thm:domino}} \label{sec-domino}

\subsection{Reduction construction}
We will reduce the counting of perfect matchings in cubic bipartite graphs to the counting of tilings with dominoes.
Given a cubic bipartite graph, we want to create a region,
such that the number of tilings of the region is the number of perfect matchings of the given graph.

Given two cubes $u,v\in\Z^3$,
a \emph{wire} is a sequence of distinct cubes $u=c_1,c_2,\ldots,c_t=v$ in $\Z^3$,
such that $c_i$ and $c_j$ are adjacent if and only if $\abs{i-j}=1$.
We call $c_1$ and $c_t$ the \emph{endpoints} of the wire, $\{c_2,\ldots,c_{t-1}\}$ the \emph{interior}, and $t$ the \emph{length}.
A collection of wires is \emph{proper} if no wires intersect the interior of other wires,
and any two cubes in the union of the wires are adjacent if and only if they are consecutive elements in (precisely) one such wire.

Let $G$ be a connected graph.
We will create a region $\G\subset\Z^3$ representing this graph $G$ as follows.
Let $f: V(G)\to\Z^3$ be an injective map, and identify each vertex with its image.
For each edge $uv\in E(G)$, we connect $f(u)$ and $f(v)$ by a wire, which is identified with the edge.
Let $\G$ be the union of these wires (and thus contains the vertices).
If the collection of the wires is proper, we call $\G$ a \emph{lattice drawing} of~$G$.

Color the space $\chi:\Z^3\to\Z_2$ in a checkerboard fashion,
and call $\chi(x,y,z)=x+y+z\pmod2$ the \emph{parity} of $(x,y,z)\in\Z^3$.

\begin{lem}
Every connected simple graph $G$ with maximum degree at most~$6$ has a lattice drawing.
Moreover, if $G$ is bipartite, it can be drawn with each edge having endpoints of opposite parity.
\end{lem}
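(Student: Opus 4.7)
The plan is to build the drawing in two stages: first embed the vertices of $G$ as cubes in $\Z^3$ with large spacing, and then route the edges as pairwise disjoint wires through the ample surrounding space.

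For the first stage, I choose integers $z_1,\ldots,z_n$ that are pairwise separated by at least some large constant $M$ depending polynomially on the number of edges $m$, and set $f(v_i) = (0,0,z_i)$. Each vertex cube $f(v_i)$ has six face-adjacent cubes in $\Z^3$, which I call its \emph{ports}; since $\deg_G(v_i)\le 6$, the edges incident to $v_i$ can be injectively assigned to these ports. In the bipartite case $G = A\sqcup B$, I further require $z_i$ to be even for $v_i\in A$ and odd for $v_i\in B$, which forces $f(u)$ and $f(v)$ to have opposite parities whenever $uv\in E(G)$.

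For the second stage, I number the edges $e_1,\ldots,e_m$ and reserve for each $e_k$ a dedicated \emph{transit layer} lying inside the half-space $\{x \ge 2\}$ at $y$-coordinate $y = 2k$. The wire for $e_k = v_iv_j$ is then assembled in three segments: a short detour from $f(v_i)$, through its assigned port, out to a cube of the form $(x_0,2k,z_i)$; a straight $z$-monotone segment along the transit layer from $(x_0,2k,z_i)$ to $(x_0,2k,z_j)$; and a symmetric detour from $(x_0,2k,z_j)$ back through the assigned port of $f(v_j)$. Since distinct transit layers have $y$-coordinates differing by at least $2$, the long middle segments of different wires are automatically pairwise non-adjacent.

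The main obstacle I anticipate is verifying properness near the vertices, where up to six wires must simultaneously leave a single cube $f(v_i)$ toward six different transit layers, without colliding with one another or producing improper face-adjacencies. I plan to resolve this by fixing, once and for all, a local \emph{template}: a bounded combinatorial gadget around each $f(v_i)$ that routes the six possible port-to-layer detours along pairwise non-adjacent lattice paths, confined to a box whose diameter grows at most linearly in $m$. Choosing $M$ larger than this diameter guarantees that templates at different vertices do not interfere, and the properness condition then reduces to a finite check on a single template. This check, together with the separation between transit layers, shows that the resulting collection of wires is proper and hence yields the desired lattice drawing.
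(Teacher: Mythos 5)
Your proposal is correct and follows essentially the same approach as the paper, which simply asserts that sufficiently spaced-out vertex images can be joined by proper wires and notes that the degree bound comes from a cube having six neighbors. Your explicit routing scheme (vertices along an axis with parity fixed by the bipartition, one transit layer per edge separated by distance two, and a finite local template to fan the up-to-six wires out of each vertex cube) is just a concrete instantiation of the construction the paper declares obvious.
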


\begin{proof}
It is obvious that if we pick $f$ so that the images are sufficiently spaced out,
we can connect the vertices with proper wires.
The maximum degree condition is due to the limitation that a cube has only $6$ neighbors.
The second part of the lemma is immediate.
\end{proof}

\subsection{Proof of correctness}
Fix a lattice drawing $\G$ constructed above, and consider a tiling by dominoes.
The neighborhood of a vertex is shown in \fig{1vertex}.
The solid blue square represents the vertex,
with three emanating wires representing its incident edges.

\begin{figure}[hbtp]
   \includegraphics[scale=0.3]{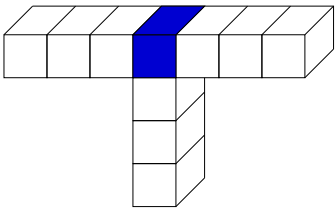}
   \caption{A vertex with three incident wires.}
   \label{fig:1vertex}
\end{figure}

Consider a wire corresponding to some edge.
Since the collection of wires is proper, there are no other cubes adjacent to the interior of this wire.
Thus in any domino tiling, the dominoes must tile the wire along its length.
Since the endpoints have opposite parity, this wire has even number of cubes.
Hence either it is partitioned into dominoes under the tiling,
or the dominoes it contains are all in its interior,
in which case its endpoints are covered by dominoes that are contained in other wires.

\begin{lem}
If $M$ is the collection of edges in $\G$ that are partitioned into dominoes,
then $M$ is a perfect matching of~$G$.
\end{lem}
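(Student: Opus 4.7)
The plan is to show, for each vertex $v$ of $G$, that exactly one wire incident to $v$ lies in $M$, by tracking which domino must cover the cube $f(v)$.

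First, I would invoke properness of the wire collection: the cube $f(v)$ has as its $\G$-neighbors precisely the three ``second cubes'' $c_2^{(e)}$, one for each incident edge $e$ at $v$ (no other cubes in $\G$ are adjacent to $f(v)$, since the wire ends there and no other wire may touch this interior-free neighborhood). In a given domino tiling, $f(v)$ is paired with exactly one of these three neighbors, say with $c_2^{(e^\star)}$ for some incident edge $e^\star$.

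Next, I would show that this forces the wire of $e^\star$ to be partitioned into dominoes. Walking along the wire $f(v) = c_1, c_2, \ldots, c_t = f(u)$, each interior cube $c_i$ has only $c_{i-1}$ and $c_{i+1}$ as $\G$-neighbors (again by properness). Since $c_2$ is already paired with $c_1$, the cube $c_3$ must pair with $c_4$; inductively $c_{2k-1}$ pairs with $c_{2k}$ all the way down the wire. Because the wire has even length (its endpoints have opposite parity, as guaranteed in the preceding lemma), the propagation reaches $c_t = f(u)$ cleanly, giving a complete domino partition of the wire. Hence $e^\star \in M$.

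Conversely, if an edge $e = uv$ lies in $M$, then by definition its wire is partitioned into dominoes, so the first domino along it pairs $f(v)$ with its second cube. Thus the unique domino covering $f(v)$ determines and is determined by the unique edge of $M$ incident to $v$, and $M$ is therefore a perfect matching of $G$.

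The only delicate step is the propagation argument: one must use properness to guarantee that interior cubes of a wire have only the two ``on-wire'' neighbors (so the tiling cannot escape the wire in the middle), and then use the parity/length condition to ensure that the induction terminates exactly at the far endpoint rather than leaving an unmatched cube. Everything else is bookkeeping.
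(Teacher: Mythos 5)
Your proposal is correct and follows essentially the same route as the paper: the paper establishes the wire dichotomy (a properly embedded wire of even length is either fully partitioned into dominoes or covers neither endpoint) in the paragraph preceding the lemma, and then the lemma's proof is exactly your observation that the unique domino covering $f(v)$ lies in precisely one incident wire, forcing that wire into $M$ and excluding the others. Your write-up just makes the propagation-along-the-wire step and the use of properness more explicit than the paper does.
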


\begin{proof}
Pick a vertex and consider the domino covering it: that domino is contained in precisely one of the incident edges.
Thus for every vertex precisely one incident edge is in $M$, implying that $M$ is a perfect matching.
\end{proof}

We just extracted a perfect matching of $G$ from a tiling of~$\G$.
Notice that this map is bijective, that is, every perfect matching induces a unique tiling.
Indeed, for each edge in the matching, tile the corresponding wires by dominoes covering the endpoints,
then finish the tiling in the obvious manner.
This means that the number of tilings is exactly the number of perfect matchings,
concluding the proof of Theorem~\ref{thm:domino}.

\bigskip

\section{Proof of Theorem~\ref{thm:domino-c}}

\subsection{Reduction construction}
The main idea is to create a large contractible ``plate'' that admits a unique domino tiling,
and then place the construction from the proof of Theorem~\ref{thm:domino} adjacent to this plate,
while being careful as to not introduce new tilings for the plate.
Given a region $\G$, a subregion $S\subset\G$ is called \emph{frozen} if
it is tiled the same way in all possible tilings of~$\G$.

We will call the $z=k$ plane \emph{Layer~$k$}.
The entire construction of $\G$ will lie in four layers, from Layer~$-1$ to Layer~$2$.
When considering cubes on a layer, we will employ the usual convention of referring to the $+x$, $-x$, $+y$, $-y$ directions as right, left, above, and below.
When referring to an adjacent cube in a different layer, we will specify the layer specifically, as to avoid confusion.

Let us start with a \emph{plate} that lies in Layer~$0$, which is a region whose columns have even lengths and are offset with each other,
causing \emph{jagged} borders.
\begin{figure}[hbtp]
   \includegraphics[scale=0.5]{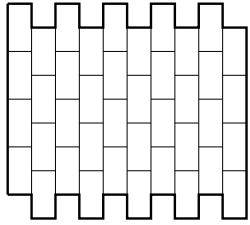}
   \caption{A plate with a unique tiling.}
   \label{fig:2plate}
\end{figure}
It is obvious that it admits a unique tiling, which is shown in \fig{2plate}.
We will modify this plate carefully as to introduce  only local changes in the tilings while preserving the unique global tiling.

Suppose we are given a cubic bipartite graph $G$ with bipartition $A$ and~$B$.
We will place the vertices in $A$ on the left side of a plate, and vertices in $B$ on the right side.
Now we must connect the corresponding vertices with wires.
We can achieve the desired connections by interchanging adjacent wires in steps.
Indeed, think of the correspondence as a permutation and write it as a product of transpositions.
The resulting schematic is known as a \emph{wiring diagram}.
These wires will, for the most part, stay in Layer~$1$.
In between each pair of wires, we will add a \emph{tension line}.
We then modify the region locally with a \emph{crossover \crossover-gadget} at each location indicated in the wiring diagram.

As an example, we see the general picture in \fig{2overview}.
\begin{figure}[hbtp]
   \includegraphics[scale=0.4]{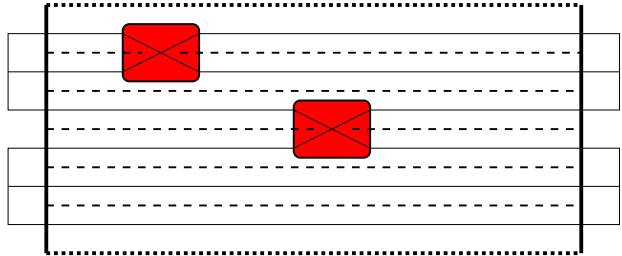}
   \caption{Overview of the layout.}
   \label{fig:2overview}
\end{figure}
There is a big plate, with offset columns causing jagged borders on the top and bottom of the region, indicated by the short dashes.
There are four vertices, each one with three wires coming out of it, indicated by the solid lines.
The tension lines between the wires are indicated by the long dashed lines.
The red boxes are the \crossover-gadgets.

The wire and the tension line are shown in \fig{2wiretension}.
\begin{figure}[hbtp]
   \subfloat[]{\label{fig:2wire}\includegraphics[scale=0.6]{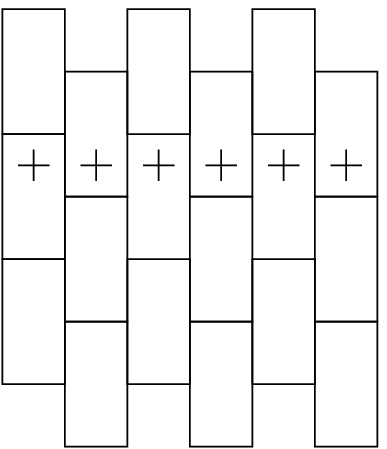}}
   \qquad
   \subfloat[]{\label{fig:2tension}\includegraphics[scale=0.6]{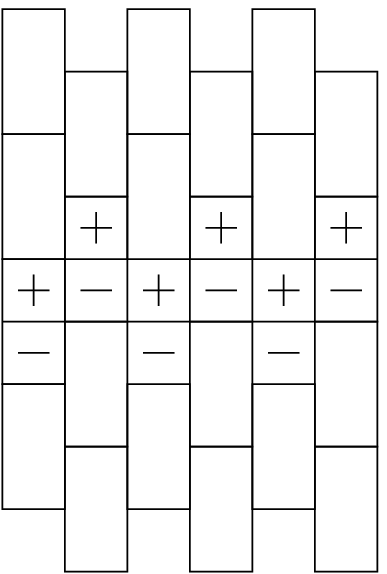}}
   \caption{A wire (left) and a tension line (right).}
   \label{fig:2wiretension}
\end{figure}
The schematic is drawn in Layer~$0$.
The unique tiling of the frozen region in Layer~$0$ is shown:
dominoes that are entirely in this layer are drawn as $2\times1$ rectangles in the plane.
When a $+$ (resp.~$-$) sign is present, it means the region includes the adjacent cube in Layer~$+1$ (resp.~$-1$).

\begin{figure}[hbtp]
   \subfloat[]{\label{fig:2vertex}\includegraphics[scale=0.6]{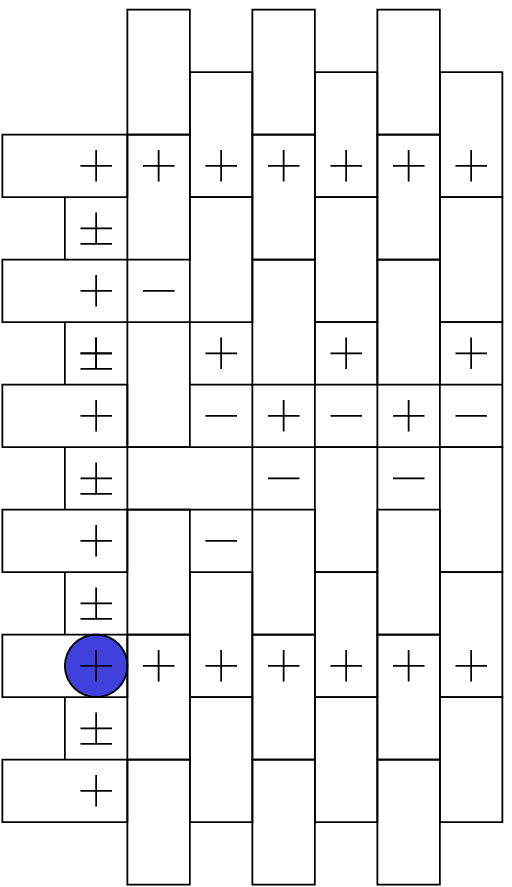}}
   \qquad
   \subfloat[]{\label{fig:2hole}\includegraphics[scale=0.6]{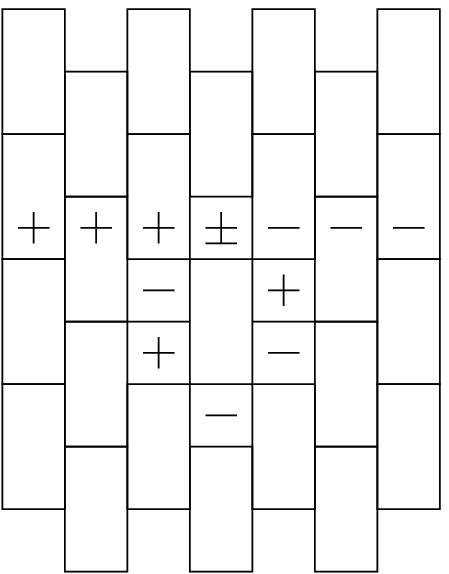}}
   \caption{Half of a vertex \vertex-gadget (left) and a hole \hole-gadget (right).}
\end{figure}

Here we show half of a vertex \vertex-gadget in \fig{2vertex}.
A single square in the schematic denotes that the cube in Layer~$0$ is always tiled with a domino sticking out to Layer~$1$ or~$-1$.
The symbol $\pm$ signifies that the adjacent cubes in both Layers $\pm1$ are included.
The blue circle is the center of the \vertex-gadget, and needs to satisfy positional parity as defined in the proof of the previous theorem.
Besides some isolated cubes, Layer~$1$ contains the wires.
Notice that there are three wires coming out of the center of the \vertex-gadget.
The third wire that is not drawn completely in the figure should mirror the other wire in the obvious way, including the initialization of the tension line.
The wires (sequences of $+$) extend towards the right.
Between each pair, we have a tension line (the sequence of jagged $+$ and~$-$).
Notice that we can easily make the \vertex-gadgets taller as to have more separation between the wires and tension lines exiting to the right.
This will be used to align all these things together correctly.
We use the mirror image when putting the \vertex-gadget on the right side of the plate.
Between pairs of \vertex-gadgets, we also add tension lines running horizontally across the entire plate.

\begin{figure}[hbtp]
   \includegraphics[scale=0.8]{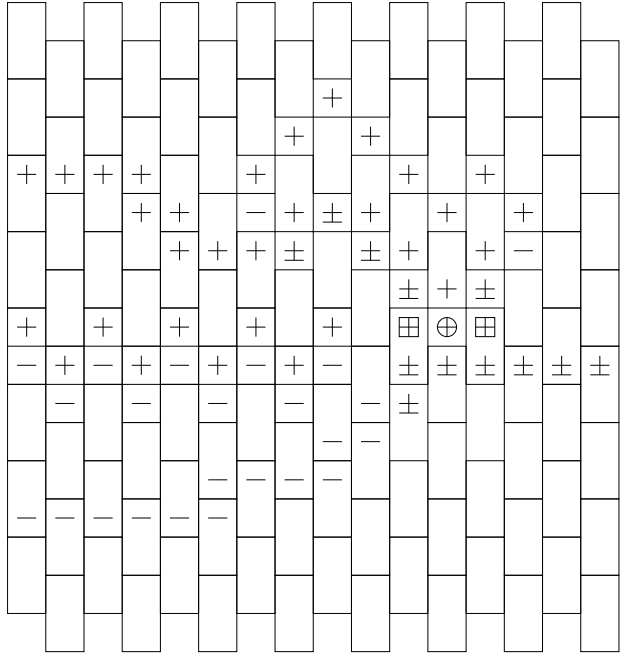}
   \caption{A splitter \splitter-gadget.}
   \label{fig:2splitter}
\end{figure}

To make the crossover \crossover-gadget, we combine the hole \hole-gadget (\fig{2hole})
and the splitter \splitter-gadget (\fig{2splitter}).
We use the same schematic convention as above in these figures.
Here the symbol~$\oplus$ denotes that there is a cube in Layer~$1$,
but not in Layer~$0$.
Similarly, symbol~$\boxplus$ signifies the inclusion
of the cubes on Layers~$1$ and~$2$ (in addition to Layer~$0$).
First use the \hole-gadget to guide the lower wire to Layer~$-1$,
and use the \splitter-gadget to merge and align them together.
Then we reverse the process using a rotated copy of the splitter,
as to make the wires crossover each other.
Finally, bring the wire back to Layer~$1$ with another \hole-gadget.
This completes the construction of the crossover \crossover-gadget.

It is clear that the \crossover-gadget takes two wires on the left, separated by a tension line,
and outputs two wires on the right, again separated by a tension line.
Notice that in the construction of the \crossover-gadget,
we could make the output wires further away from each other,
which is also the case for the \vertex-gadget, as mentioned above.
As such, we could align all these wires and tension lines so they feed into each other perfectly across the entire construction.
Thus using \crossover-gadgets, we are able to correctly connect the wires coming from \vertex-gadgets.
This complete the construction of the region, which we shall call~$\G$.

\subsection{Proof of correctness}
It remains to check that $\G$ is contractible,
and its domino tilings are in bijection with perfect matchings in graph~$G$.

\begin{lem}
The region $\G$ is contractible.
\end{lem}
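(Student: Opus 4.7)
My plan is to exhibit a deformation retraction of $\Gamma$ onto its intersection with Layer~$0$, and then show that the resulting planar region is simply connected. The same retraction carried out on interiors will handle the contractible interior requirement.

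Step~1 (the retraction). Every cube of $\Gamma$ outside Layer~$0$ is described in the schematic by one of the symbols $+$, $-$, $\pm$, $\boxplus$, or $\oplus$. In the first four cases the cube sits directly above or below a cube of the Layer~$0$ region by definition of the symbol, so I would collapse it vertically onto its Layer~$0$ neighbor. The $\oplus$ cubes appear only inside the splitter \splitter-gadget in \fig{2splitter} and are always horizontally adjacent to a $\boxplus$ or $+$ cube in Layer~$1$; I would collapse these first horizontally to such a neighbor, then vertically. This composition yields a deformation retraction of $\Gamma$ onto its Layer~$0$ subregion $\Gamma_0 = \Gamma \cap \{z = 0\}$.

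Step~2 (contractibility of the Layer~$0$ plate). The base plate of \fig{2plate} is a topological disk in the plane, being a jagged but simply connected rectangle. Each \vertex-, \hole-, \splitter-, and \crossover-gadget performs only a local modification of the Layer~$0$ footprint: adding jagged tabs, rearranging a few columns, or punching out small $\oplus$ squares. By inspection of \fig{2vertex}, \fig{2hole}, and \fig{2splitter}, none of these operations separates the plate or encloses an $\oplus$ square inside $\Gamma_0$, so $\Gamma_0$ remains a simply connected planar region, hence contractible with contractible interior.

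The main obstacle will be verifying Steps~1 and~2 carefully inside the crossover \crossover-gadget, which is the only gadget using all four layers and the only place where two wires physically cross over one another via the composition of a \hole-gadget, two \splitter-gadgets, and another \hole-gadget. One must check that the layer-by-layer collapse proceeds without ever merging disjoint components or crushing a closed loop to a point, and that the Layer~$0$ footprint of the \crossover\ is indeed simply connected despite the vertical routing through Layers~$-1$ and~$2$.
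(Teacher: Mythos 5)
There is a genuine gap, and it sits exactly where you predicted trouble: at the \splitter-gadget inside the crossover. Your Step~2 asserts that the Layer~$0$ footprint $\Gamma_0$ is simply connected and that no $\oplus$ square gets enclosed in it. But by construction the $\oplus$ symbol in \fig{2splitter} means a cube that is present in Layer~$1$ and \emph{absent} in Layer~$0$, while the surrounding Layer~$0$ cubes of the plate are all present; so $\Gamma_0$ has a genuine hole for every \splitter-gadget, and it is \emph{not} simply connected. Consequently your Step~1 cannot work either: a deformation retraction of $\Gamma$ onto $\Gamma_0$ would be a homotopy equivalence, which is impossible if $\Gamma$ is contractible and $\Gamma_0$ has a noncontractible loop around the $\oplus$ hole. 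Concretely, the $\oplus$ cube together with the $3\times3$ block of Layer~$1$ cubes around it is precisely the cap that kills the loop in Layer~$0$; "collapsing it horizontally to a neighbor, then vertically" deletes that cap and changes $\pi_1$, so it is not a valid collapse.

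The correct argument runs in the opposite order. First fill in the Layer~$0$ hole at each \splitter-gadget: the adjacent cube in Layer~$1$ is present (and sits inside a $3\times3$ block), while the adjacent cube in Layer~$-1$ is absent, so the added cube is attached along a contractible union of faces (its top face and four side faces) and filling the hole changes neither the homotopy type of $\ov\Gamma$ nor that of its interior --- the absence of the Layer~$-1$ cube is what prevents creating an interior point hole as in the left example of \fig{0contract}. Only after Layer~$0$ is hole-free can one deformation retract the cubes of the other layers onto Layer~$0$ and contract the resulting plate to a point. So your overall two-step outline is salvageable, but the hole-filling step (with the explicit check on the Layer~$\pm1$ neighbors) is the essential missing idea, and the claim that $\Gamma_0$ itself is simply connected must be dropped.
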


\begin{proof}
Consider Layer~$0$.
It has a hole for each \splitter-gadget.
However, the adjacent cube in Layer~$1$ is present, and is contained in a $3\times3$ region.
Furthermore, the adjacent cube in Layer~$-1$ is not present.
As such, filling in the hole in Layer~$0$ does not alter contractibility.
Now that there are no holes in Layer~$0$, we may deformation retract everything in other layers to Layer~$0$, and then contract it to a point.
\end{proof}

It remains to find a frozen subregion $\G'\subset\G$ such that $\G_0=\G\bs\G'$ is a lattice drawing of the given graph.
Let us start with $\G'=\varnothing$ (thus $\G_0=\G$) and alter $\G'$ inductively.
A cube in a region is \emph{isolated} if it has precisely one neighbor (out of the six adjacent possibilities).
Notice that an isolated cube must be tiled by a domino covering its unique neighbor.
We will \emph{remove} the isolated cubes of $\G_0$ by moving them from $\G_0$ to~$\G'$.
We repeat this process inductively until there are no more isolated cubes.

\begin{lem}
At any step, $\G'$ is a frozen subregion of~$\G$.
When the process finishes, $\G_0$ is a lattice drawing of the given graph.
\end{lem}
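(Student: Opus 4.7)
The plan is to handle the two assertions separately: the first by induction on the freezing steps, the second by a gadget-by-gadget analysis.

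For the frozen property, I would argue by induction on the number of steps. The base case $\G'=\varnothing$ is vacuous. For the inductive step, suppose $\G'$ is frozen and $c\in\G_0$ is isolated with unique $\G_0$-neighbor $c'$. In any tiling of $\G$, the domino covering $c$ must pair $c$ with some neighbor. The other neighbors of $c$ lie either outside $\G$ or in $\G'$, and the ones in $\G'$ are, by the inductive hypothesis, each covered by a specific forced domino. Hence none of them can serve as the partner of $c$, so $c$ must be paired with $c'$, and this pairing is the same in every tiling of $\G$. Thus adding $c$ to $\G'$ preserves the frozen property.

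For the lattice drawing assertion, I would show that the freezing process reduces each component of the construction so that only the centers of the \vertex-gadgets and the wire cubes connecting them survive in $\G_0$. Starting with the jagged borders of the plate, the extremal cube of each column is isolated in $\G_0$, so freezing propagates down (or up) the column until the entire plate, outside of the gadgets, is absorbed into $\G'$. The horizontal tension lines between rows of \vertex-gadgets have endpoints that become isolated, and freezing propagates along their length; the tension lines between pairs of wires behave similarly. Inside each \vertex-gadget, freezing begins at the boundary and consumes every cube except the central cube, leaving three wire stubs emanating from it. Inside each \hole-gadget, \splitter-gadget, and \crossover-gadget, the auxiliary cubes freeze, while the wire cubes passing through remain in $\G_0$, since each wire cube retains two $\G_0$-neighbors (its predecessor and successor along the wire) and never becomes isolated.

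The main obstacle is the case analysis for the \splitter-gadget and \crossover-gadget, where the wires change layers and cross each other via Layers $-1$, $0$, $1$, and $2$. Here one must carefully check that every non-wire cube in these gadgets eventually becomes isolated in some order, while simultaneously verifying that each wire cube keeps both of its wire neighbors in $\G_0$ throughout the process. Once this case analysis is complete, $\G_0$ consists precisely of the \vertex-gadget centers (at positions of the correct parity, by construction) connected by the surviving wire cubes. These form a proper collection of wires with endpoints of opposite parity, realizing a lattice drawing of~$G$, so the bijective argument of Section~\ref{sec-domino} applies verbatim and completes the proof of Theorem~\ref{thm:domino-c}.
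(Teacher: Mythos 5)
Your proposal is correct and follows essentially the same route as the paper: the frozen property via induction on removal steps (the paper dismisses this as "immediate by construction," so your more careful argument is fine), and the lattice-drawing claim via propagation of isolated-cube removal from the jagged borders and tension lines through each gadget, with the \splitter- and \crossover-gadgets requiring the same case-by-case inspection that the paper itself only sketches. The only organizational difference is that the paper frames the second part as an induction on the number of \crossover-gadgets (base case: no crossovers), whereas you sweep through the gadgets directly; this does not change the substance.
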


\begin{proof}
It follows immediately by construction that $\G'$ is frozen.
The rest of this proof is devoted to the second part of the lemma.

Let us perform another induction on the number of \crossover-gadgets.
For the base case, suppose we did not have any \crossover-gadgets.
That is, we start with the boring cubic graph where disjoint pairs of vertices are joined by three edges each.

Consider each \vertex-gadget.
After the first step, everything in Layers~$0$ and $-1$ on the two left-most columns in \fig{2vertex} are removed,
leaving the wire in Layer~$1$.
The tension lines are all removed as well.
It is clear that in the next step, the remaining horizontal domino will also be removed, along with the domino above it.

Thus after two steps, what we are left with in Layer~$0$ is a collection of disconnected plates,
each of which has a wire across it horizontally in Layer~$1$.
These wires protrude to the left and right.

Each of these plates have jagged columns thanks to the initial boundary and the tension lines.
As such, in each step, the top- and bottom-most cubes are isolated, hence removed with their neighbors, creating another jagged boundary.
This process continues until all cubes in Layer~$0$ are removed, including the ones adjacent to the wire in Layer~$1$.
Note that half of the cubes adjacent to said wire are removed from above, and the other from below.
Thus it is important that both sides were jagged,
which is the motivation for introducing the tension lines.
We are left with the lattice drawing of the aforementioned contrived cubic graph.

Now for the inductive step, suppose we add an extra \crossover-gadget.
We want to show that the global situation is not altered, and that the local situation is what we wanted.
Indeed, the \crossover-gadget consists of two \hole-gadgets and two \splitter-gadgets.
Having analyzed the \vertex-gadget,
one should be able to inspect the \hole-gadget construction in \fig{2hole},
and easily see that after finitely many steps, we will be left with the wires in
Layers~$\pm1$, plus the connecting cube between them in Layer~$0$.

The \splitter-gadget is a bit more complicated, but can be carefully analyzed in the same manner.
Indeed, after the first step,
all the cubes in Layer~$-1$ are removed except for the wire.
Similarly, the Layers~$1$ and $2$ cubes for $\boxplus$ are removed, so are all the cubes in Layer~$1$, save the wire.
As for Layer~$0$, we get a small island of six vertical dominoes in the middle,
and a giant plate with enough tension lines such that a similar inductive removal process as above will remove it completely.
Hence we are left with precisely the wires, thus completing the proof of the lemma.
\end{proof}

\bigskip

\section{Proof of Theorem~\ref{thm:slab} and the first part of Theorem~\ref{thm:slab-s}}

\subsection{Reduction construction}
Here we reduce \problem{$1$-in-$3$ SAT} to a tiling problem.
The reduction will be parsimonious, thus proving both \NP-completeness and \SP-completeness.

Consider a sequence of cubes in the $z=0$ plane,
where two cubes are adjacent if and only if they are consecutive elements in the sequence
(this is called a wire in the proof of Theorem~\ref{thm:domino}).
Consider the sequence as a region and duplicate it also in the $z=1$ plane.
We now call a \emph{wire} the union of these two copies.
In a wire, a pair of adjacent cubes, one with $z=0$ and one with $z=1$, is called a \emph{cube pair}.
In any tiling by slabs, each cube pair must be covered by the same slab.
If we translate the wire to the $z=k$ and the $z=k+1$ planes, we say the wire \emph{lives} in the $z=k$ \emph{biplane}.
Color the biplane $\chi(x,y,z)=x+y\pmod2$ in a checkerboard fashion, ignoring the $z$-coordinate.
We call the color of a cube (pair) its \emph{parity}, which can be \emph{even} or \emph{odd}.
Similarly, we may rotate and consider wires living in $x=k$ or $y=k$ biplanes.
In those cases, the biplane coloring would ignore the $x$- or the $y$-coordinate, respectively.
Notice that a cube $(x,y,z)\in\Z^3$ might have different parities depending on the biplane being considered.
However, each cube pair lives in a unique biplane: thus the parity of a cube pair is unambiguous.
When we draw diagrams consisting of wires in each biplane, we simply draw the two-dimensional regions used to form the wires.


The variable \variable-gadget, shown in \fig{3variable}, is made by removing two diagonal cubes of a $2\times2\times2$ region,
and then attaching six wires.
\begin{figure}[hbtp]
   \includegraphics[scale=0.3]{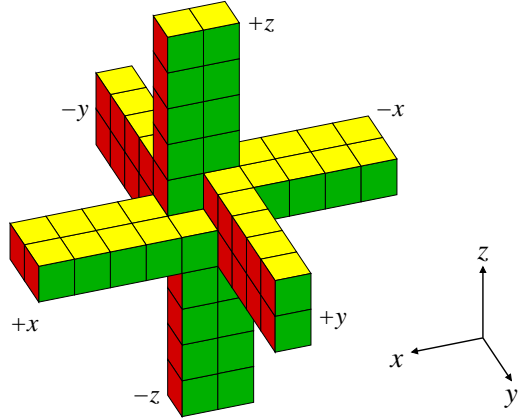}
   \caption{A variable \variable-gadget, with six wires coming out of it.}
   \label{fig:3variable}
\end{figure}
Notice that the wires come in pairs, and each pair lives in a biplane.
We label the wires with their initial coordinate directions.
Each wire has an endpoint in the \variable-gadget, called the \emph{variable endpoint}; the other one is called the \emph{free endpoint}.
We now describe where to place the \variable-gadgets and how to join these free endpoints together.

Suppose we are given a $3$-SAT expression $\cC=\{C_1,\ldots,C_t\}$ on the variable set $X=\{v_1,\ldots,v_n\}$,
where each clause $C_i=(c_{i1},c_{i2},c_{i3})$ consists of three literals,
each either a variable $v_k$ or a negation $\ov v_k$, for some $k\in[n]=\{1,\ldots,n\}$.
We now construct a region~$\G$.

If $c_{ij}$ is $v_k$ or its negation $\ov v_k$, place a \variable-gadget at $(10k, 10(3i+j), 0)$.
That is, the wires live in the $x=10k$, $y=10(3i+j)$, and $z=0$ biplanes, respectively.
Moreover, the variable endpoint of each wire has odd parity with respect to the coloring of the biplane in which the wire lives.

Consider the $x=10k$ plane for each $k\in[n]$, which is drawn schematically in \fig{3sync}.
\begin{figure}[hbtp]
   \includegraphics[scale=0.5]{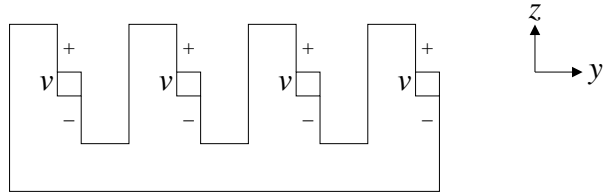}
   \caption[Synchronization of variables.]{Synchronization of variables: This is a diagram in some $x=10k$ (bi)plane. The squares represent the \variable-gadgets and the lines represent the wires.}
   \label{fig:3sync}
\end{figure}
Notice that all the $\pm z$ wires corresponding to the variable $v_k$ are on this biplane.
For each \variable-gadget, we will bend the $+z$ wire to the left and down as shown.
We link up the first wire with the last, and also the remaining adjacent pairs.

\fig{3clause} shows a clause \clause-gadget, which is simply a joining of three wires.
\begin{figure}[hbtp]
   \includegraphics[scale=0.3]{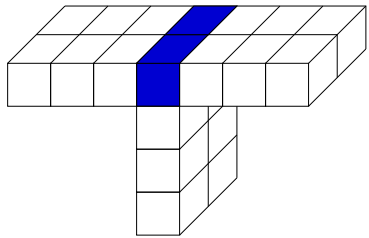}
   \caption{A clause \clause-gadget, where three wires meet.}
   \label{fig:3clause}
\end{figure}
The parity of the gadget is the parity of the blue cube pair at the center where the three wires meet.

Now let us work in the $z=0$ biplane, schematically shown in \fig{3link}.
\begin{figure}[hbtp]
   \includegraphics[scale=0.5]{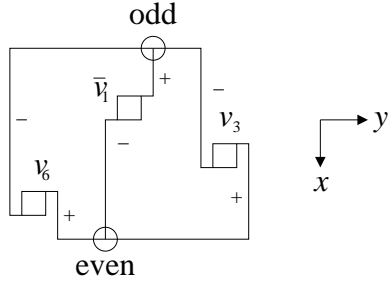}
   \caption[Diagram in the $z=0$ (bi)plane.]{Diagram in the $z=0$ (bi)plane. The circles, where the lines meet, indicate the \clause-gadgets, which are at two different parities.}
   \label{fig:3link}
\end{figure}
For each clause, there are three corresponding \variable-gadgets.
From each \variable-gadget, take the $+y$ (resp.~$-y$) wire if the corresponding variable appears in the clause unnegated (resp.\ negated).
Extend these three wires to $x=10(n+1)$ and join them together at a \clause-gadget with even parity.
Similarly, extend the three remaining $\pm y$ wires to $x=-10$ and join together at a \clause-gadget with odd parity.
We call these the even and odd \clause-gadgets corresponding to each clause, respectively.

Finally, for each \variable-gadget, let us join the remaining $+x$ wire and the $-x$ wire together to form a loop.

\begin{lem}
The wires can be joined together such that no cubes are adjacent unless they are in fact adjacent cubes from a single wire.
\end{lem}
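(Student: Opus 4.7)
The plan is to exploit the very generous spacing between \variable-gadgets and to use the fact that the wires partition by direction into three families, each living in a distinct biplane. Since the \variable-gadget for the $j$-th literal of $C_i$ in variable $v_k$ sits at $(10k, 10(3i+j), 0)$, its $\pm z$ synchronization wires live in the $x=10k$ biplane, its $\pm x$ loop wires live in the $y=10(3i+j)$ biplane, and its $\pm y$ clause wires live in the $z=0$ biplane. The $x$-biplanes corresponding to different variables, and the $y$-biplanes corresponding to different literal occurrences, are pairwise disjoint; only the clause wires share a single biplane, namely $z=0$.

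Next I would route each family in its own biplane. Each $\pm x$ loop is alone with a single gadget in its $y$-biplane, so it is closed by any rectangular detour around the gadget. The $\pm z$ synchronization wires in the biplane $x=10k$ are routed as in \fig{3sync}: the $+z$ wire of each $v_k$-gadget is bent leftward into the empty strip next to its gadget and then downward to meet the $-z$ wire of the next $v_k$-gadget in $y$-order, with the outermost pair closed around the back to form a cycle. The $\pm y$ clause wires in the $z=0$ biplane are routed as in \fig{3link}: each clause $C_i$ reserves the horizontal band $y\in[10(3i+1)-1,\,10(3i+3)+1]$, its unnegated-literal wires are extended rightward to an even-parity \clause-gadget at $x=10(n+1)$, and its negated-literal wires are extended leftward to an odd-parity \clause-gadget at $x=-10$.

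Since every routing stays in the interior of its biplane except at the \variable-gadget itself (where at most two cubes meet each biplane, by construction), two wires in distinct biplanes cannot have adjacent cubes. The main obstacle is therefore the crowded $z=0$ biplane, which must carry all $3t$ clause wires. It is resolved by the band assignment: different clauses occupy disjoint $y$-bands of width $30$, and all \clause-gadgets sit on just the two vertical lines $x=-10$ and $x=10(n+1)$ at distinct $y$-heights, ruling out inter-clause crossings. Within a single clause the three wires are staggered by one row each over the long horizontal run before converging at the gadget, preventing internal adjacencies. Any parity alignment required at the \clause-gadget is absorbed by inserting a unit jog in one of the long horizontal runs, which is always possible since each such run has length at least $10$. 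This yields a collection of wires meeting the adjacency condition, proving the lemma.
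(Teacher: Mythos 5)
Your argument is correct and is essentially the paper's own (one-sentence) proof spelled out in detail: both rest on the factor-$10$ spacing of the \variable-gadgets and the separation of the three wire families into disjoint biplanes, with only the $z=0$ biplane requiring any real routing care. The extra remarks about parity jogs are harmless but unnecessary for this lemma, which concerns only adjacency.
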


\begin{proof}
As the \variable-gadgets are sufficiently spaced out, it is very obvious that we can make the wires avoid each other,
besides when they meet at the \variable- and \clause-gadgets.
\end{proof}

This finishes the construction.

\subsection{Proof of correctness}
Fix a region $\G$ constructed above from a $3$-SAT expression~$\cC$.
It remains to show that the tilings of~$\G$ and the $1$-in-$3$ satisfying assignments of~$\cC$ are in bijective correspondence.

First, consider a tiling of~$\G$.
We say that the \emph{phase} of an endpoint of a wire is \emph{positive}
if the wire contains the tile that covers that endpoint, and \emph{negative} otherwise.

\begin{lem} \label{lem:wire}
A wire of even length has the same phase at both endpoints,
while a wire of odd length has opposite phases at its two endpoints.
\end{lem}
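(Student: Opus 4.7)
The plan is to run a parity argument along the wire, based on the observation that a slab tile occupies exactly two adjacent cube pairs. I would fix a wire $P_1,\ldots,P_t$ lying in, say, the $z=k$ biplane, and consider any slab of a valid tiling that contains some cube pair $P_i$. Because the two cubes in $P_i$ lie at $z=k$ and $z=k+1$, the slab must have extent $2$ in the $z$-direction, so it has shape $1\times 2\times 2$ or $2\times 1\times 2$. Such a slab contains $P_i$ together with exactly one face-adjacent cube pair. By properness of the wires inside $\G$, the only cube pairs face-adjacent to an interior $P_i$ (with $1<i<t$) are $P_{i-1}$ and $P_{i+1}$; so every interior cube pair is matched by its slab with a neighbor in the wire.

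Consequently, restricted to interior indices, the map sending $P_i$ to its slab-partner is a matching that uses only consecutive pairs, and the only flexibility lies at the two endpoints $P_1$ and $P_t$. At $P_1$ the partner is either $P_2$ (positive phase, by definition) or a cube pair outside the wire belonging to the gadget to which $P_1$ is attached (negative phase), and symmetrically for $P_t$.

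To finish, I would split into the three phase patterns. If both endpoints are positive, the slabs $\{P_1,P_2\}$ and $\{P_{t-1},P_t\}$ are used, leaving a middle of $t-4$ cube pairs that must be partitioned into consecutive pairs, hence $t-4$, and therefore $t$, is even. If both are negative, the middle has $t-2$ cube pairs, again forcing $t$ even. If the two phases differ, the middle contains $t-3$ cube pairs, forcing $t$ odd. So matching phases occur exactly when $t$ is even, as claimed.

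The step that needs the most care is the first one, where properness of the wire inside $\G$ is used to rule out a slab straddling the wire and an unrelated nearby region; after that the argument is a routine consecutive-pair count. The edge cases $t\in\{2,3\}$ are consistent with the claim and do not need separate treatment: in each impossible subcase the middle count is negative or odd, which is already excluded by the general argument.
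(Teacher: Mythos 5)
Your proposal is correct and follows essentially the same route as the paper: the slabs are forced to pair up consecutive cube pairs along the wire (using properness to rule out slabs straddling the wire and anything else), and the phase relation then follows from a parity count on the leftover interior. Your write-up simply makes explicit the case analysis that the paper leaves implicit in its one-line argument.
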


\begin{proof}
As in the proof of Theorem~\ref{thm:domino}, the slabs must line up along the length of the wire,
since the various wires do not intersect or come near each other (except at the endpoints).
\end{proof}

The following is immediate by observing the way slabs can come together to tile the \variable-gadget:
\begin{lem} \label{lem:variable}
There are two ways to tile each \variable-gadget locally.
In either tiling, the three $+$~wires have the same phase at their variable endpoints,
and the three $-$~wires have the opposite phase at their variable endpoints as the $+$~wires.
\end{lem}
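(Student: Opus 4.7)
The plan is to prove both parts by direct inspection of the gadget's local tilings, exploiting its geometric structure. First, I would observe that the six cubes of the \variable-gadget (the $2\times 2\times 2$ bounding region minus two antipodal corner cubes) form a hexagonal $6$-cycle under the cube-adjacency relation: each cube has exactly two neighbors among the remaining five, arranged cyclically, with successive edges cyclically alternating among the three coordinate axis directions. The line through the two removed corners furnishes a $3$-fold rotational symmetry of the gadget.

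Next, I would verify that no slab can lie entirely within the gadget. Every $2\times 2\times 1$ slab contained in the bounding $2\times 2\times 2$ region must occupy one of its six $2\times 2$ faces, and each such face meets at least one of the two removed corners. Consequently, every slab that covers a gadget cube must extend outside, where the only available extension is into the first cube pair of one of the six attached wires. Such a ``boundary slab'' covers exactly two adjacent gadget cubes---an edge of the hexagonal cycle---together with the first cube pair of that wire. A case check of the construction (using the parity constraints on the variable endpoints that pin down each wire's attachment position) then establishes that the six wires are assigned to the six cycle-edges in bijection, with the three $+$ wires attaching to one of the two perfect matchings of the $6$-cycle and the three $-$ wires attaching to the complementary perfect matching.

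From here, any local tiling of the gadget corresponds bijectively to a perfect matching of the $6$-cycle. Since a $6$-cycle has exactly two perfect matchings, there are exactly two local tilings, proving the first claim: one uses the three boundary slabs of the $+$ wires, and the other uses those of the $-$ wires.

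For the phase claim, in the tiling using the $+$-matching each $+$ wire has its first cube pair covered by a slab extending into the gadget (and thus not contained inside the wire), so the variable endpoint has \emph{negative} phase; meanwhile each $-$ wire has its first cube pair paired with its second by a slab lying entirely inside the wire, giving \emph{positive} phase at the variable endpoint. In the other tiling the roles are reversed. In either case the three $+$ wires share a common phase and the three $-$ wires share the opposite phase, as claimed. The main obstacle is the bookkeeping in the second step---confirming that the $\pm$ wires really do attach to complementary alternating matchings of the hexagon---which is a routine but careful check of the orientations and parity assignments specified in the construction.
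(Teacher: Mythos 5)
Your proof is correct and takes essentially the same route as the paper, which simply asserts the lemma as ``immediate by observing the way slabs can come together to tile the \variable-gadget''; your write-up is a rigorous version of that inspection, organized via the observation that the six gadget cubes form a $6$-cycle whose two perfect matchings (the $+$ edges and the $-$ edges, forced to be complementary alternating matchings because every $2\times2\times1$ layer of the bounding cube meets a removed corner) correspond exactly to the two local tilings. The one step you defer---that the $\pm$ wires attach to complementary alternating matchings---does check out: the removed antipodal corners leave exactly one intact hexagon edge in each of the six attachment positions, and these alternate $+,-,+,-,+,-$ around the cycle.
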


Let the \emph{phase} of the \variable-gadget be the common phase of the $+$~wires at their variable endpoints.

\begin{lem}
The \variable-gadgets corresponding to $v_k$ (or its negation $\ov v_k$) all have the same phase.
\end{lem}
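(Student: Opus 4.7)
The plan is to work in the biplane $x=10k$, which contains all the $\pm z$ wires of the \variable-gadgets corresponding to $v_k$ (or $\ov v_k$), together with the linking wires drawn in \fig{3sync}. I will argue that these links chain all such \variable-gadgets into a single connected cycle of wires and gadgets. It will then suffice to show that any two \variable-gadgets $A$ and $B$ whose free endpoints are joined by a single linking segment must share a common phase; iterating this pairwise comparison around the cycle gives a common phase for every \variable-gadget of $v_k$.

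For such a pair, the link identifies a $\pm z$ free endpoint of $A$ with a $\pm z$ free endpoint of $B$, producing a combined wire $W_{AB}$ that runs between two variable endpoints, both of which have odd parity in the $x=10k$ biplane by construction. I will use Lemma~\ref{lem:wire} to read off the relation between the phases at these two endpoints from the parity of the length of $W_{AB}$, and Lemma~\ref{lem:variable} to translate each endpoint phase into the gadget phase of $A$ or $B$: equal to the gadget phase when the endpoint sits on a $+z$ wire, opposite when it sits on a $-z$ wire. Inspecting \fig{3sync} shows that the sign of the wire each end attaches to, combined with the length parity of $W_{AB}$, yields that the phases of $A$ and $B$ agree.

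The main obstacle is exactly this last bookkeeping: I need to confirm, for each type of link appearing in \fig{3sync}, that the parity of the linking segment, the signs ($+z$ vs.~$-z$) of the two wires it connects, and the fixed odd parity of the two variable endpoints combine so that the gadgets on the two sides of the link are forced to be in phase rather than anti-phase. Once this is verified for every type of link in the cycle, the inductive propagation of the common phase around the cycle finishes the proof.
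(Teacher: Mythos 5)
Your approach is essentially the paper's: compare adjacent gadgets along the chain in \fig{3sync}, combine Lemma~\ref{lem:wire} (length parity of the linking wire) with Lemma~\ref{lem:variable} (the $+$/$-$ sign flip at each gadget), and propagate around the cycle. The one piece of bookkeeping you left open resolves immediately because there is only one type of link: by construction each link joins the $-z$ wire of one gadget to the $+z$ wire of the next, both variable endpoints have odd parity so the combined wire has odd length, hence by Lemma~\ref{lem:wire} the two variable endpoints have \emph{opposite} phases; since one sits on a $-z$ wire and the other on a $+z$ wire, the sign flip of Lemma~\ref{lem:variable} converts this into \emph{equal} gadget phases, as required.
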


\begin{proof}
Consider two adjacent \variable-gadgets in the construction shown in \fig{3sync}.
Suppose the gadget on the left has positive phase (the opposite situation is similar).
Then by Lemma~\ref{lem:variable}, its $-z$ wire has negative phase at the variable end.
This wire is linked with the $+z$ wire of the gadget on the right.
Since both endpoints have odd parity,
the wire has odd length,
thus by Lemma~\ref{lem:wire}, the variable endpoint of the $+z$ wire of the gadget on the right (hence the gadget) has positive phase.
\end{proof}

This allows us to assign a truth value to the variable~$v_k$:

\begin{lem}
Let the boolean assignment of $v_k$ be the common phase of the \variable-gadgets corresponding to~$v_k$.
This defines a $1$-in-$3$ satisfying assignment.
\end{lem}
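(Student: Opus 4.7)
The plan is to follow the phase through each wire from its variable endpoint into the clause-gadget, then combine with a local constraint imposed by the gadget itself. Fix a clause $C_i=(c_{i1},c_{i2},c_{i3})$ and its three associated variable-gadgets. For each $j$, consider the wire running from the $j$-th variable-gadget to the even \clause-gadget for $C_i$: by the construction it is the $+y$ wire if $c_{ij}$ is unnegated, and the $-y$ wire if $c_{ij}$ is negated. Lemma~\ref{lem:variable} then says that in the unnegated case the variable endpoint is positive exactly when $P_{k_j}$ (the variable-gadget phase) is positive, while in the negated case it is positive exactly when $P_{k_j}$ is negative. Under the declared assignment ($v_{k_j}$ True iff $P_{k_j}$ positive), in both cases the variable endpoint is positive iff the literal $c_{ij}$ is True.

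Next I would propagate this phase to the free endpoint using Lemma~\ref{lem:wire}. Every variable endpoint has odd parity in its biplane, and the center of the even \clause-gadget has even parity, so the three cube pairs adjacent to that center (the free endpoints of the incoming wires) have odd parity. Both endpoints of each wire therefore have the same parity, forcing the wire to have odd length, hence opposite phases at the two endpoints. Consequently, at the free endpoint the phase is negative if and only if $c_{ij}$ is True.

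To finish, I would analyze the \clause-gadget itself. The central cube pair must be covered by some slab, and the only admissible extensions go into the three incident wires. Exactly one of them is chosen: for that wire the slab straddles the wire boundary and is not contained in the wire, so the free endpoint has negative phase; for each of the other two wires, the central cube pair is already taken, so the only way to cover the free endpoint is by a slab lying entirely inside the wire, giving positive phase. Combined with the previous step, this means exactly one literal of $C_i$ is True. Running the same bookkeeping on the odd \clause-gadget (where the parity argument flips so wires have even length and hence equal phases at the endpoints, but the role of $+y$ versus $-y$ also swaps) gives the same conclusion, confirming consistency. Thus the assignment is $1$-in-$3$ satisfying.

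The main obstacle is keeping three independent sign conventions aligned: negated versus unnegated literals, $+y$ versus $-y$ wires, and wire-length parity as dictated by endpoint parities. The fact that everything composes correctly is not accidental; it relies on the careful parity choices made earlier in the construction (variable endpoints of odd parity, and deliberately opposite parities for the centers of the even and odd \clause-gadgets), so the bulk of the work is simply verifying that these data combine, via Lemmas~\ref{lem:wire} and~\ref{lem:variable}, to send "literal True'' to "negative free endpoint at both \clause-gadgets.''
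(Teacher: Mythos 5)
Your argument is correct and follows essentially the same route as the paper's proof: propagate the variable-gadget phase along the wire via Lemmas~\ref{lem:wire} and~\ref{lem:variable}, then observe that the slab covering the center of the even \clause-gadget lies in exactly one incident wire. The only (harmless) difference is a bookkeeping convention --- you take each wire's free endpoint to be the cube pair adjacent to the center rather than the center itself, so you get ``odd length, opposite phases, negative at the free endpoint'' where the paper gets ``even length, same phase, positive at the center,'' and your extra consistency check at the odd \clause-gadget is not needed for this direction of the bijection (it matters only for the inverse map).
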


\begin{proof}
Fix a clause $(c_1,c_2,c_3)$ and consider the corresponding even \clause-gadget.
There are three wires connecting the corresponding \variable-gadgets to the center of the \clause-gadget.
The variable endpoints of the wires have odd parity,
and the free endpoints are all at the center of the \clause-gadget, which has even parity.
As such, the wires have even lengths;
thus by Lemma~\ref{lem:wire}, each wire has the same phase on both its endpoints.

The center of the \clause-gadget is tiled by some slab, which is contained in precisely one of its three wires.
Without loss of generality, suppose only the wire corresponding to $c_1$ is of positive phase (at both of its endpoints).
Recall that the phase of a \variable-gadget is the phase of its $+$~wires at their variable endpoints.
If $c_i=v_k$ is an unnegated variable, then the $+y$ wire was used, and thus~$v_k$ (and hence~$c_i$) is positive if and only if~$i=1$.
On the other hand, if $c_i=\ov v_k$ is a negated variable, then the $-y$ wire was used.
Thus $v_k$ is \emph{negative} if and only if $i=1$, but then $c_i$ is again positive if and only if~$i=1$.
\end{proof}

This shows that the assignment is indeed $1$-in-$3$ satisfying.
We thus have a map from the tilings of $\G$ to the satisfying assignments of~$\cC$.

For the inverse, suppose we have a $1$-in-$3$ satisfying assignment.
Simply tile each \variable-gadget according to the assignment in the obvious way.
It only remain to tile the wires.
However, by construction, once we choose the phase of a \variable-gadget,
the tiling of all six wires emanating from that region is forced.

The $\pm z$ wires fit together because the phases of the \variable-gadgets are consistently assigned,
in accordance to the truth value of the variable in the given satisfying assignment.

Recall that for each \variable-gadget, its $+x$ and $-x$ wires form a loop.
Since the two endpoints both have odd parity, the length of the wire is odd.
By Lemma~\ref{lem:wire}, the two endpoints have opposite phases in a tiling,
which is precisely what the \variable-gadget would produce as in Lemma~\ref{lem:variable}.

It is obvious that the $\pm y$ wires fill up the even \clause-gadgets perfectly because the boolean assignment was $1$-in-$3$ satisfying.
For the odd \clause-gadgets, notice that since we take the leftover wires,
precisely one of the three is \emph{negative} at its variable endpoint.
However, since we join them at a cube pair with odd parity, the wires are of odd lengths,
thus precisely one of the three is positive on the free endpoint at the \clause-gadget, as needed.

This establishes the desired bijective correspondence,
and implies that the construction is indeed a parsimonious reduction
from \problem{$1$-in-$3$ SAT} to the slab tiling problem,
concluding the proof of Theorem~\ref{thm:slab}.
Note that this also implies that the associated counting problem is \SP-complete,
yielding the first part of Theorem~\ref{thm:slab-s};
in the next section we prove a stronger result.

\bigskip

\section{Proof of Theorem~\ref{thm:slab-c} and the second part of Theorem~\ref{thm:slab-s}}

\subsection{Reduction construction}
Given a region $\G_0$ as constructed from the previous section,
we will enlarge it to a contractible region $\G\supset\G_0$,
such that $\G'=\G\bs\G_0$ is a frozen subregion of~$\G$.

\begin{figure}[hbtp]
   \subfloat{\label{fig:4var-2c}\includegraphics[scale=0.25]{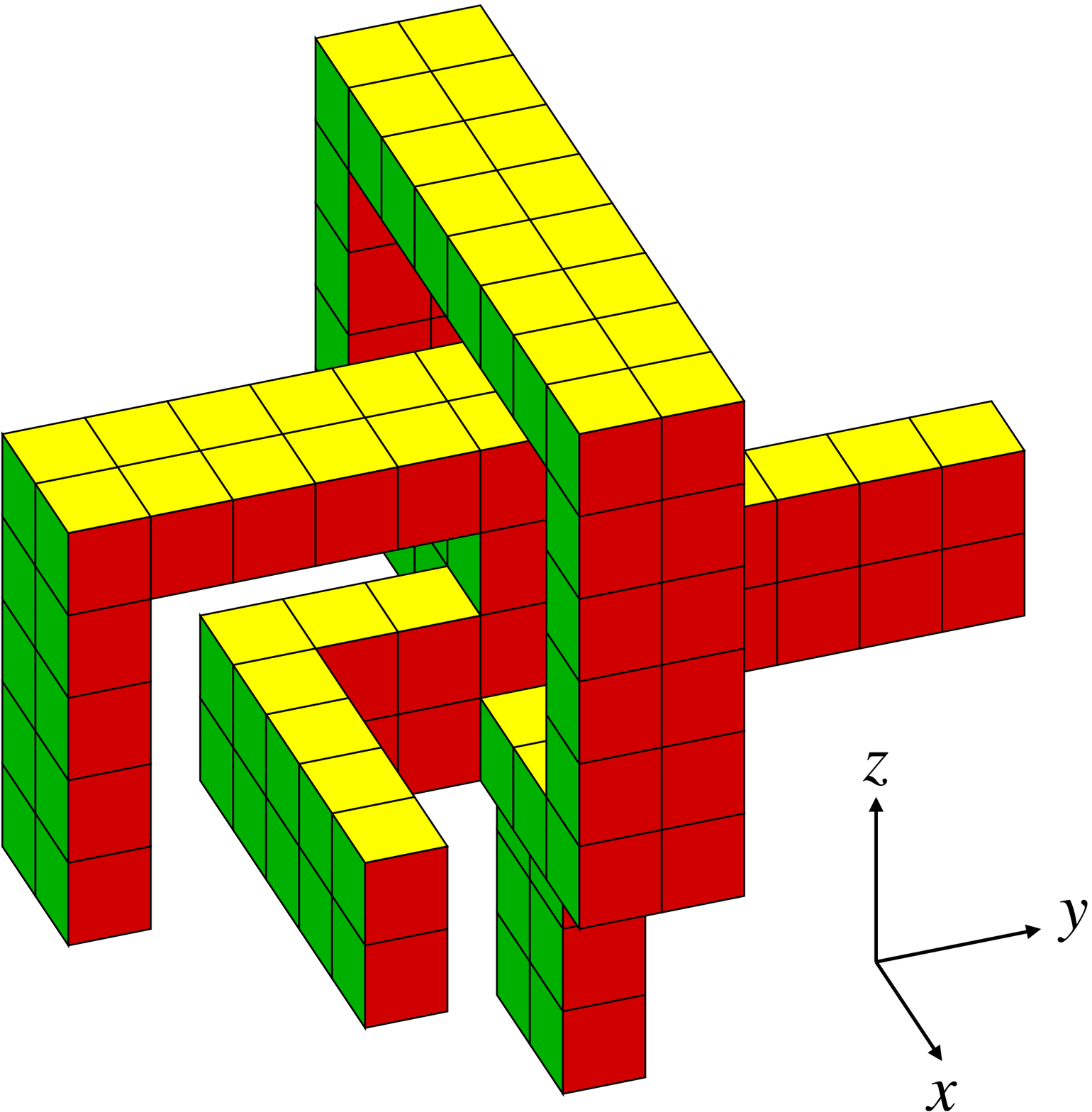}}
   \qquad
   \qquad
   \qquad
   \subfloat{\label{fig:4var-4c}\includegraphics[scale=0.25]{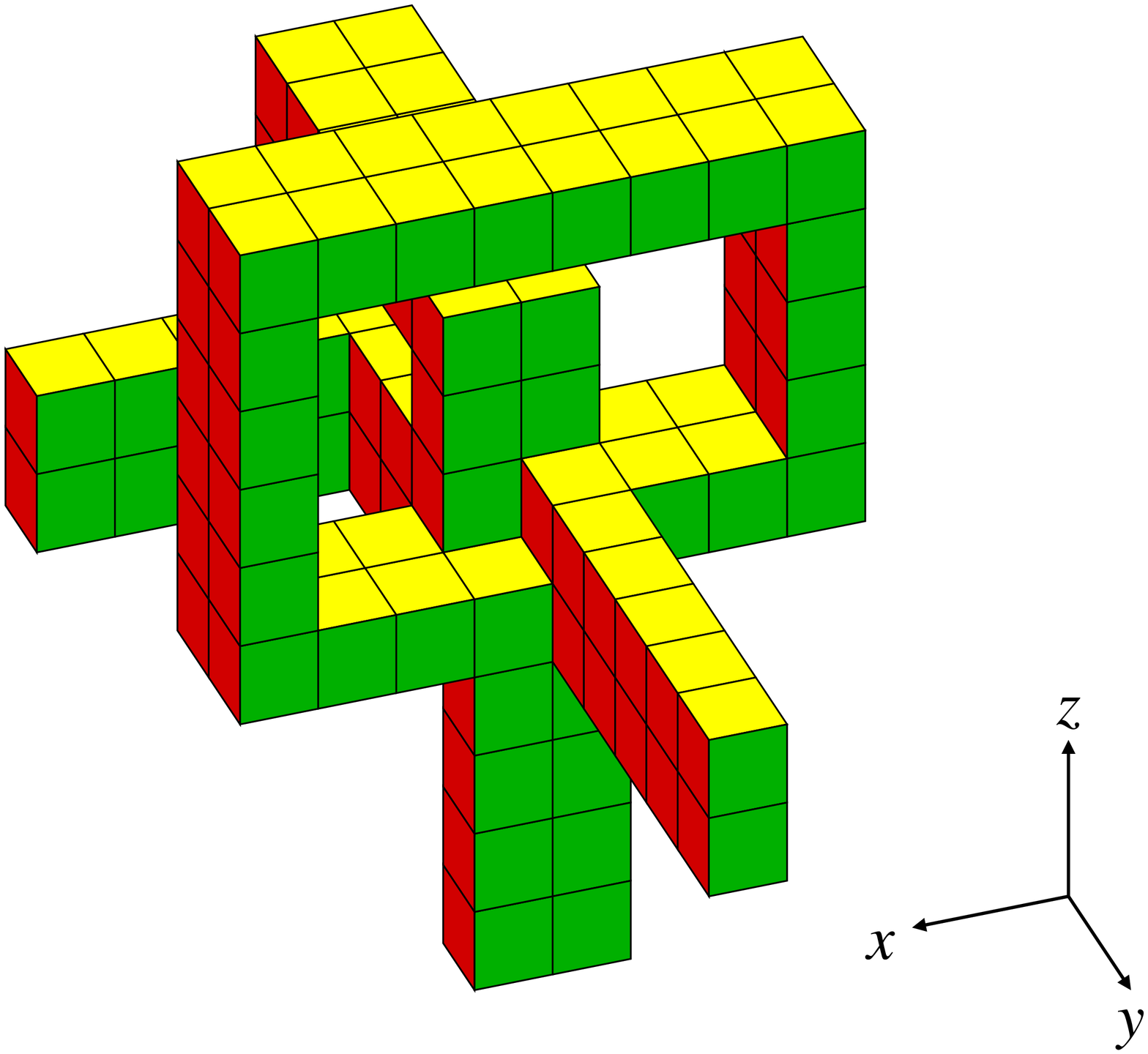}}
   \caption{A variable \variable-gadget, shown in two perspectives.}
   \label{fig:4var}
\end{figure}

First we modify the variable \variable-gadget.
The following description is relative to a \variable-gadget placed at $(0,0,0)$.
We must make these adjustments to each \variable-gadget used.
Recall that in the proof of Theorem~\ref{thm:slab}, each $+z$ wire was bent as in \fig{3sync}.
Let us make this precise by making it bend towards the $-y$ direction when it hits $z=3$,
and bend towards $-z$ when it hits $y=-5$ (see \fig{4var}).
We define a region $X$, shown in \fig{4x}, that will fill the hole this loop makes.
\begin{figure}[hbtp]
   \subfloat[Region $X$.]{\label{fig:4x}\includegraphics[scale=0.3]{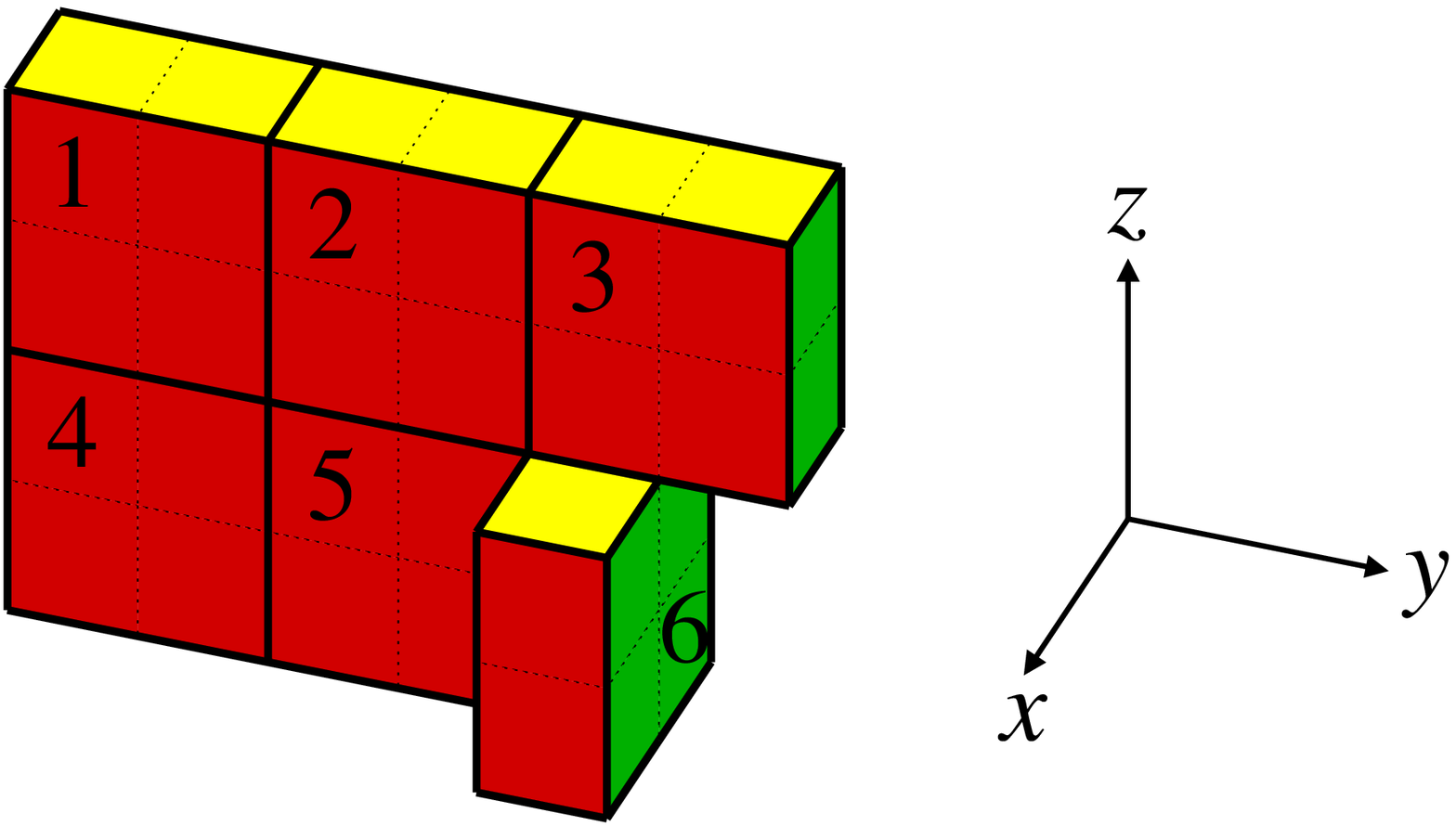}}
   \qquad\qquad
   \subfloat[Region $Y$.]{\label{fig:4y}\includegraphics[scale=0.3]{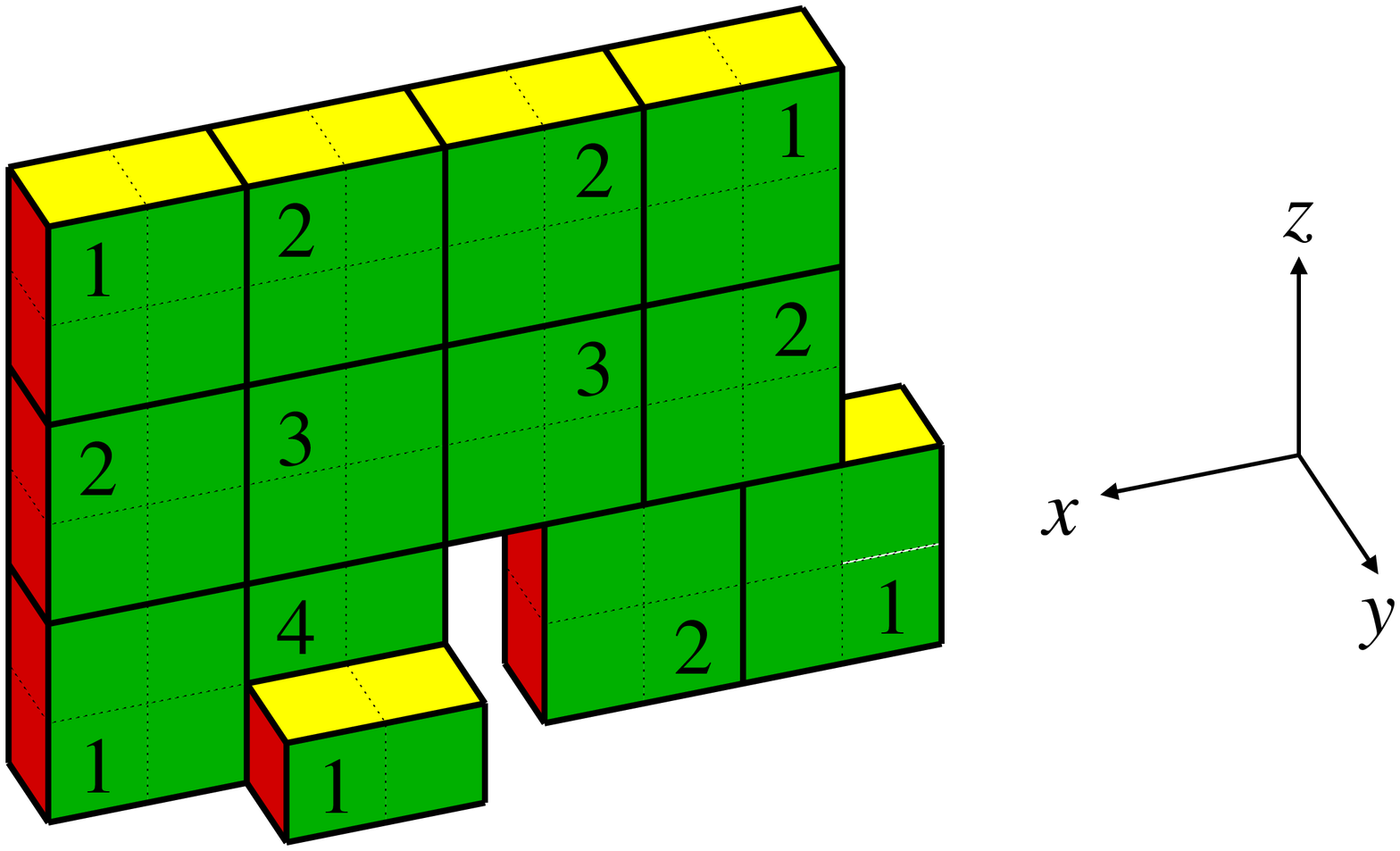}}
   \caption{Regions used to modify the \variable-gadget.}
   \label{fig:4xy}
\end{figure}
Indeed, let $X=\{-1\}\times[-5,0]\times[0,3]\cup\{(0,-1,0),(0,-1,1)\}\bs\{(-1,0,0),(-1,0,1)\}$.
The unique tiling is indicated in the figure; the labels will be used in the proof of Lemma~\ref{lem:4frozen}.
Similarly, the $\pm x$ wires are linked together to form a loop.
Let us bend them towards the $+z$ direction when they hit $4$ and $-3$, respectively,
and bend towards each other when they hit $z=5$.
Define a region that will fill this hole, shown in \fig{4y}:
Take $$[-3,4]\times\{2\}\times[0,5],$$
add the following six cubes $$(-4,2,0),(-4,2,1),(1,3,0),(2,3,0),(1,1,1),(2,1,1),$$
and remove these two cubes $$(0,2,0),(0,2,1).$$
Call this region~$Y$.
Note that the slab labeled~$4$ in the figure extends behind and is hidden from view.
We will add regions $X$ and $Y$ to the \variable-gadget.
This makes the \variable-gadget contractible.

Let $Z=[-10,10n+11]\times[0,30(t+1)+1]\times\{-1\}\subset\Z^3$,
where $n$ and $t$ are the number of variables and clauses, respectively.
We then modify $Z$ with a \emph{hole \hole-gadget} around each \variable-gadget to allow its $\pm z$ wires to pass through.
\begin{figure}[hbtp]
   \includegraphics[scale=0.3]{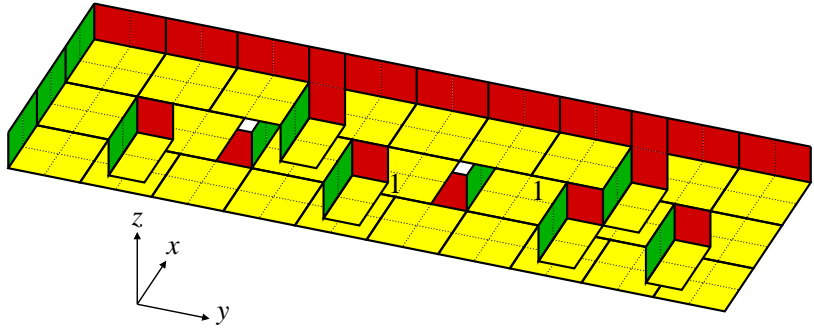}
   \caption{A hole \hole-gadget.}
   \label{fig:4hole}
\end{figure}
Indeed, \fig{4hole} shows the construction in the $z=-1$ plane viewed from below.
The surrounding slabs on the boundary match that of the unique tiling of~$Z$.
The two cube pairs missing allow the $\pm z$ wires to pass through
(spaced out according to the precise construction of the \variable-gadget in \fig{4var}).
The cube pairs in $z=-2$ plane are necessarily tiled with its unique neighbor in the $z=-1$ plane.
Let $X_k=\{10k-1\}\times[0,30(t+1)+1]\times[-9,-2]$ for each $k\in[n]$.
Here we assume that the linking of the $\pm z$ wires (see \fig{3sync}) are done at $z$-coordinates, say, $-5$ and $-7$, which are in $[-9,-2]$.
Let $\G'$ be the disjoint union of the $X$ and $Y$ for each \variable-gadget, the region $Z$, and the $X_k$ for each~$k$.
This finishes the construction.

\subsection{Proof of correctness}
It remains to check that that the construction works.  The following two lemmas easily imply the result.

\begin{lem}
The region $\G$ constructed above is contractible.
\end{lem}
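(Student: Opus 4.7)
The plan is to exhibit an explicit deformation retraction of $\overline\G$ onto a point, proceeding stage-by-stage through the added building blocks, and along the way verifying that no enclosed void is created (so that the interior is also contractible, as required by the definition in Section~\ref{s:def}).

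\textbf{Stage 1 (Variable gadget loops).} At each variable \variable-gadget, the $+z$ wire is bent into a U-shape in the $x=10k$ biplane that, together with an arc of the gadget at $z=0$, encloses a single topological hole. By its very definition, the region $X$ in \fig{4xy} is the $2$-dimensional disk filling this hole (in the $x=-1$ slice, with two $\pm$ adjustments to seal the ends). Similarly, the $\pm x$ wires are joined at $z=5$ into a loop whose enclosed hole is filled precisely by $Y$. Thus after attaching $X$ and $Y$, each modified \variable-gadget plus its two filled loops is a thickened disk, hence contractible, with the $\pm y$ and $-z$ wires emerging as contractible stubs.

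\textbf{Stage 2 ($\pm z$ synchronization below $Z$).} In each $x=10k$ biplane, the $-z$ wires (and whichever portions of the $+z$ wires descend past $Z$) from the various \variable-gadgets associated to $v_k$ are joined at $z\in\{-5,-7\}$ into a cycle lying in the slab $z\in[-9,-2]$. The wall $X_k=\{10k-1\}\times[0,30(t+1)+1]\times[-9,-2]$ in the adjacent biplane is glued to these wires along a connected subset and spans the entire cycle. So the union $X_k\cup(\text{wires below }Z)$ deformation retracts onto $X_k$, which is itself a rectangular disk and therefore contractible. Hence the whole sub-$Z$ synchronization structure attaches to $Z$ (through the two cube-pair holes of each \hole-gadget in \fig{4hole}) as a contractible ``pendant.''

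\textbf{Stage 3 (Plate $Z$ and $z=0$ biplane).} Once Stage 2 has retracted the sub-$Z$ structure, every hole-gadget puncture of $Z$ is homotopically plugged from below; what remains of $Z$ is a rectangular $2$-dimensional disk. Directly above $Z$ sits the slab $z\in\{0,1\}$ containing all $\pm y$ wires and \clause-gadgets from the Theorem~\ref{thm:slab} construction. Each such wire or gadget rests on $Z$ along its entire bottom face, so the whole $z\in\{0,1\}$ slab deformation retracts onto $Z$; in particular, any $1$-cycles in the $z=0$ biplane coming from variables shared among clauses are filled in. Finally $Z$ together with the contracted variable-gadget pieces above it retracts to a point.

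\textbf{Main obstacle.} The delicate part is verifying compatibility of these retractions at the interfaces — most notably at each \hole-gadget of $Z$, where the \variable-gadget above, the two $\pm z$ wires, the wall $X_k$ below, and $Z$ itself all meet — and checking that the combined configuration of \fig{4var}, \fig{4xy} and \fig{4hole} never traps an empty interior cube (which would break contractibility of the interior). Both checks are local and reduce to direct inspection of the stated cube sets; in particular the explicit ``$+$'' and ``$-$'' adjustments in the definitions of $X$ and $Y$ are precisely what is needed to seal off the loops without introducing enclosed voids.
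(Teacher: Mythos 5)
Your proposal is correct and follows essentially the same route as the paper: fill the two wire loops at each \variable-gadget with $X$ and $Y$, retract the sub-$Z$ wires onto the $X_k$ walls and then onto the $z=-1$ plate, and deformation retract everything else onto $Z$ and then to a point. The one point the paper makes explicitly that you only fold into your ``main obstacle'' is that the center of the original \variable-gadget (the $2\times2\times2$ block minus two diagonal cubes) has a point hole in its \emph{interior} --- the failure mode of \fig{0contract}, not an empty cube --- and that the slab labeled~$4$ in $Y$ is precisely what fills it.
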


\begin{proof}
Notice that there are no holes in the $z=-1$ plane.
In $z<-1$, we may deformation retract the wires onto the $X_k$ plates,
and then deformation retract these plates onto the $z=-1$ plane.

We now check that the modified \variable-gadget is contractible.
The center of an original \variable-gadget was an example of a
non-contractible region shown in \fig{0contract}.
Indeed, the interior will leave a point hole in the middle;
however, it is filled by the slab labeled~$4$ in \fig{4y}.
The loop formed by the $+z$ wire bending down is filled by~$X$,
while the loop formed by joining the $\pm x$ wires is filled by~$Y$.
Moreover, the $\pm y$ wires are all lying along $Z$ so we may
deformation retract everything to the $z=-1$ plane.
We omit the (easy) details.
\end{proof}

\begin{lem}\label{lem:4frozen}
The subregion $\G'\subset\G$ is frozen.
\end{lem}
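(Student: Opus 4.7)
The plan is to argue that in any slab tiling of $\G$, the slabs covering $\G'$ are uniquely determined and lie entirely inside $\G'$.  The approach mirrors the inductive isolated-cube removal from the proof of Theorem~\ref{thm:domino-c}, adapted from dominoes to slabs: iteratively identify cubes whose covering slab is forced by local geometry, and propagate the forcing inward through $\G'$.

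The key preliminary observation is that in any slab tiling of $\G$, no slab can span between $\G'$ and $\G_0$.  The fact used in the proof of Theorem~\ref{thm:slab}, that every cube pair in a wire is covered by a single slab, extends to the modified region: for a wire cube $c$ whose partner is $c'$, any slab covering $c$ but not $c'$ would leave $c'$ without a valid cover, as a neighbor-by-neighbor inspection shows — the wire is only one cube thick in the direction perpendicular to its biplane, and the surrounding pieces of $\G'$ block the remaining possibilities.  Hence every slab covering a cube of $\G'$ lies entirely inside $\G'$, and it suffices to show that each piece of $\G'$ (the plate $Z$, each $X_k$, and each inserted $X$ and $Y$) has a unique internal tiling.

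For the big plate $Z$ --- a single layer at $z=-1$ with even side lengths --- I would start at a corner, say $(-10,0,-1)$: its neighbors in $\G$ all lie within $Z$, so the slab covering it is forced to be the flat $2\times 2$ corner slab.  Peeling this slab exposes a new ``corner'' that is forced the same way, and corner-peeling propagates across $Z$.  When the propagation reaches a hole \hole-gadget, the missing cube pair at $z=-1$ together with the cube pair at $z=-2$ forces a slab bridging those layers (\fig{4hole}), and the flat propagation resumes beyond.  An analogous argument applies to each $X_k$: one cube thick in $x$ with $yz$-cross-section a $(30(t+1)+2)\times 8$ rectangle, it admits a unique $yz$-flat $2\times 2$ tiling.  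Finally, each inserted $X$ and $Y$ has the unique tiling shown in \fig{4x} and \fig{4y}, the labels of which indicate an order in which each slab in turn becomes the only valid cover for some exposed cube.

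The main obstacle, I expect, is cleanly verifying the preliminary spanning-slab observation, particularly at the bent portions of the $\pm z$ wires that dip into the $z\in[-9,-2]$ range of $X_k$ (where the wires are linked at $z=-5$ and $z=-7$), and at the hole \hole-gadget transitions in $Z$.  The argument amounts to checking that each potential spanning slab would orphan the wire cube's partner; once this is in hand, the remaining forcing is routine corner-peeling for $Z$ and the $X_k$, plus direct inspection for $X$ and $Y$, and the lemma follows.
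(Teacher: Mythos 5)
Your overall strategy---inductive forcing of slabs from the corners and boundaries inward, cube-pair rigidity of the wires, and direct inspection of $X$ and $Y$---is the same as the paper's, which formalizes the forcing via ``isolated'' cubes (cubes with only two available neighbors once $3\times1\times1$ configurations with cube pairs are discounted) and peels $X_k$, $Z$, $X$, and $Y$ in a single induction. The one place your organization breaks down is the front-loaded claim that \emph{no slab spans $\G'$ and $\G_0$}, justified by cube-pair integrity alone. That justification does work where the wire is free-standing (e.g.\ alongside $X_k$, where the partner of a wire cube has no neighbors outside the wire, so splitting the pair really would orphan the partner). But at a hole \hole-gadget the $\pm z$ wire passes \emph{through} the $z=-1$ plane, so both cubes of its cube pair there sit flush inside the plate $Z$, each with plate cubes on its own side: a flat slab in the $z=-1$ plane can a priori grab one wire cube together with three cubes of $Z$, while a second flat slab absorbs the partner with three other cubes of $Z$. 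Nothing is orphaned, so cube-pair integrity by itself does not exclude spanning slabs there.

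What actually excludes them is that by the time the boundary-inward peeling of $Z$ reaches the hole, the surrounding plate cubes are already frozen into the unique tiling of $Z$ (this is exactly why the paper insists that the surrounding slabs in \fig{4hole} match the unique tiling of $Z$, and why it finishes with the two leftover slabs around the hole at the very end). So at the hole-gadgets the no-spanning statement is a \emph{consequence} of the forced tiling of $Z$, not a prerequisite for it, and your two-stage decomposition (first no spanning slabs, then independent unique tilings of each piece of $\G'$) is circular there. The repair is to drop the standalone preliminary and run one global induction in which each step removes a slab forced by the current configuration---which is what you are implicitly doing for $Z$ and $X_k$ anyway, and what the paper does explicitly; the same interleaved check is needed where $X$ and $Y$ abut the \variable-gadget and its wires.
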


\begin{proof}
Consider a cube $c\in\G$.
If it forms a $3\times1\times1$ region with a cube pair,
then no slab containing this cube may contain its neighbor that is part of the cube pair.
Not counting such cube pairs, if $c$ only has two neighbors left,
then $c$ must be tiled by a slab covering both these neighbors.
If this happens, we call $c$ \emph{isolated} and say that it \emph{forces} the unique slab containing~$c$.
Just like in the proof of Theorem~\ref{thm:domino-c},
we may inductively remove these forced slabs, which are obviously frozen.

Consider region~$X_k$.
Notice that the cube at the corner $(10k-1,1,-9)$ is isolated.
After removing the slab forced by it, we may consider the new corner next to it at $(10k-1,3,-9)$.
Inductively, we may remove the cubes with $z=-9$ or $-8$.
Now when looking at $z=-7$, we may run into cubes that neighbor $\pm z$ wires.
However, these wires are made of cube pairs that form $3\times1\times1$ regions with the cubes in question.
As such, we may continue the removal process unhindered.
It is clear that in this manner, we may remove all cubes from~$X_k$.

Similarly, we remove most slabs from $Z$, starting from the boundary and working our way in.
The ones sticking down out of the $z=-1$ plane can also be removed,
leaving us with precisely two slabs around the hole for the $-z$ wire from the \hole-gadget (labeled with $1$ in \fig{4hole}).
As for each \variable-gadget, we remove all the cubes from $X$ and~$Y$.
In \fig{4xy}, the forced slabs are labeled with a sample removal order,
where each number is positioned on the isolated cube used at each step.
Finally, we may remove the leftover slabs from~$Z$.
\end{proof}

Now, given a \problem{$1$-in-$3$ SAT} expression~$\cC$,
we may take a region $\G_0$ constructed in the proof of Theorem~\ref{thm:slab}
and enlarge it to a contractible region $\G$ as described above.
Since $\G'=\G\bs\G_0$ is frozen, $\G$ is tileable if and only if $\G_0$ is tileable.
These are tileable if and only if $\cC$ is $1$-in-$3$ satisfiable, thus Theorem~\ref{thm:slab-c} is proved.
Moreover, since both reductions are parsimonious,
we conclude the second counting result in Theorem~\ref{thm:slab-s} as well.

\bigskip

\section{Generalized dominoes in higher dimensions}\label{s:gen}
Let $\cD_r^d\subset\Z^d$ be a $2\times\ldots\times2\times1\times\ldots\times1$
region with $r$ number of $2$'s and $d-r$ number of~$1$'s.
We call this a \emph{generalized domino} in $d$ dimensions of rank~$r$.
Call the $r$ coordinate directions that are $2$ cubes wide \emph{fat}.
Previously we were concerned with tiling by dominoes $\cD_1^3$ and slabs $\cD_2^3$ in three dimensions.

\begin{thm}\label{t:gen}
For $2\leq r<d$, tiling (contractible) $d$-dimensional regions with $\cD_r^d$ is \NP-complete.
Similarly, for $1\leq r<d$ and $d\geq3$, tiling (contractible) $d$-dimensional regions with $\cD_r^d$ is \SP-complete.
\end{thm}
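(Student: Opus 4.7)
The plan is to lift the three-dimensional constructions (Theorems~\ref{thm:slab}, \ref{thm:slab-c}, \ref{thm:domino}, \ref{thm:domino-c} and~\ref{thm:slab-s}) to dimension $d\ge 4$ by \emph{extruding} them along the extra directions. For $r=1$ (the \SP-only case) I will start from a region $\G_0\subset\Z^3$ produced by the proof of Theorem~\ref{thm:domino} or~\ref{thm:domino-c} and form $\G=\G_0\times\{0\}^{d-3}\subset\Z^d$. For $r\ge 2$ I will start from a region $\G_0\subset\Z^3$ produced by the proof of Theorem~\ref{thm:slab} or~\ref{thm:slab-c} and form $\G=\G_0\times[0,1]^{r-2}\times\{0\}^{d-r-1}\subset\Z^d$. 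In each case the goal is a parsimonious bijection between the $\cD_r^d$-tilings of $\G$ and the three-dimensional $\cD_r^3$-tilings of $\G_0$, which transfers \NP-completeness (for $r\ge 2$) and \SP-completeness from the already-proven three-dimensional results.

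The heart of the argument is a dimension count on a single tile. Any copy of $\cD_r^d$ used in $\G$ has exactly $r$ fat coordinate directions, forming a set $S\subset[d]$ with $|S|=r$. Since the last $d-r-1$ coordinates of $\G$ are only one cube thick, no tile can be fat in those, so $S\subset[r+1]$ and the tile misses exactly one direction in $[r+1]$. For $r=1$ this already forces the fat direction to lie in $[3]$, so every tile is a three-dimensional domino contained in the original slice and the bijection is immediate. For $r\ge 2$ the remaining case to rule out is that the missing direction lies in the extrusion range $\{4,\ldots,r+1\}$; otherwise the tile would be fat in all three of the first coordinates, and so $\G_0$ would have to contain a solid $2\times 2\times 2$ subregion.

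I expect the main obstacle to be verifying this last impossibility, but it should follow by inspection of the slab construction in the proof of Theorem~\ref{thm:slab}: wires are built from cube pairs that are only one cube thick in two of the three coordinate directions; each \variable-gadget is itself a $2\times 2\times 2$ block with two diagonal cubes removed and is joined to the rest only through wires; each \clause-gadget is the meeting point of three wires. None of these subregions hosts a solid $2\times 2\times 2$ block. Granting this, every $\cD_r^d$-tile of $\G$ must be the extrusion of a three-dimensional slab along all of $\{4,\ldots,r+1\}$, and conversely each three-dimensional slab extrudes in exactly one such way, so the bijection is parsimonious.

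Finally, the extrusion preserves contractibility. Whenever $\G_0$ is contractible with contractible interior---as supplied by Theorems~\ref{thm:domino-c} and~\ref{thm:slab-c}---the region $\ov\G$ is the product of $\ov{\G_0}$ with a convex box in $\R^{d-3}$, and $\mathrm{int}(\ov\G)$ likewise splits as $\mathrm{int}(\ov{\G_0})$ times an open convex box, so both are contractible. Thus $\G$ is contractible in the sense of Section~\ref{s:def}, and combined with the bijection above this establishes Theorem~\ref{t:gen} in both the general and the contractible settings.
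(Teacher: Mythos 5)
Your construction and dimension count are exactly the paper's: extrude by $\{0,1\}^{r-2}\times\{0\}^{d-r-1}$, note that the set of fat directions misses exactly one coordinate of $[r+1]$, and exclude the bad orientation (all of the first three coordinates fat) by showing the three-dimensional region contains no solid $2\times2\times2$ subregion. For $r=1$ and for the non-contractible case with $r\geq2$ this is complete and matches the paper, as does your (correct) product argument for contractibility.

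The gap is in the contractible case for $r\geq 2$. You verify the ``no solid $2\times2\times2$ subregion'' property only for the components of the Theorem~\ref{thm:slab} construction (wires, \variable-gadgets, \clause-gadgets), and then apply the same conclusion to the region from Theorem~\ref{thm:slab-c}. But that region is obtained by gluing large filled pieces onto the first one --- the plate $Z$, the walls $X_k$, and the patches $X$ and $Y$ around each \variable-gadget --- and the resulting region \emph{does} contain solid $2\times2\times2$ subregions, so the purely geometric exclusion fails. A $\cD_r^d$-tile could a priori be fat in all of the first three coordinates and thin in one extrusion direction, which (after slicing along the extrusion coordinates) would induce a tiling of the three-dimensional region by slabs together with $2\times2\times2$ cubes rather than by slabs alone. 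Closing this requires an argument about the \emph{tilings} rather than the geometry: the paper observes that, by the frozen-region analysis in the proof of Theorem~\ref{thm:slab-c}, no slab tiling of that region contains a $2\times2\times2$ subregion tiled by two slabs, and this is what forces every tile of the lifted region to keep exactly two fat directions among the first three coordinates. (A parallel remark applies to the contractible $r=1$ case via Theorem~\ref{thm:domino-c}, though there the extrusion is trivial and your count already forces the fat direction into $[3]$.)
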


Note that in other cases of $r$ and~$d$, these problems are in~\P{} (see next section).

\begin{proof}
We reduce specific tiling problems in $\Z^3$ to higher dimensions.
Let us first prove the statements without the contractibility constraint.

Suppose $2\leq r<d$.
Let $\G\subset\Z^3$ be a region as afforded by the proof of Theorem~\ref{thm:slab}.
Let $\G'=\G\times\{0,1\}^{r-2}\times\{0\}^{d-r-1}$,
that is, $\{(x_1,\ldots,x_d)\in\Z^d:(x_1,x_2,x_3)\in\G,\ x_4,\ldots,x_{r+1}\in\{0,1\},\ x_{r+2}=\ldots=x_d=0\}$.
Consider a tiling of $\G'$ by $\cD_r^d$.
Notice that there are no $2\times2\times2$ subregion in $\G$,
thus each tile must be oriented with two of its fat directions in the first three coordinates,
the remaining $r-2$ fat directions in the next $r-2$ coordinates.
This means that any $\cD_r^d$-tiling of $\G'$ induces a $\cD_2^3$-tiling of $\G$.
It is easy to see that this correspondence is actually a bijection,
finishing the proof of \NP-completeness, and also \SP-completeness when $r\geq2$.

The case of $r=1$ is straightforward.
Take $\G$ from the proof of Theorem~\ref{thm:domino}.
Define $\G'=\G\times\{0\}^{d-3}$, and follow the argument above.

\smallskip

To prove the result for contractible regions, proceed in the same manner,
and take the regions constructed in the proof of the corresponding theorems.
For $2\leq r<d$, take the region $\G$ from the proof of Theorem~\ref{thm:slab-c},
and define $\G'$ in the same way as above.
Notice, however, that the above strategy does not yield a reduction
from $\cD_2^3$-tilings to $\cD_r^d$-tilings in general.
Indeed, we used the fact that the specific $\G\subset\Z^3$ did not contain
$2\times2\times2$ subregions, which is no longer the case.
The outlined argument actually still works, but more care must be taken.
A tile might now be oriented with three of its fat directions in the first three coordinates.
This means a $\cD_r^d$-tiling of $\G'$ will, \latin{a priori},
induce a tiling of $\G$ with slabs and the $[2\times2\times2]$ cube.
However, in the proof of Theorem~\ref{thm:slab-c},
we see that no tilings of $\G$ by slabs contain a $2\times2\times2$ subregion tiled by two slabs.
As such, all $\cD_r^d$ tiles in $\G'$ are still constrained to be oriented as before,
with precisely two fat directions in the first three coordinates.
This implies the result in this case.

The same approach works for the $r=1$ case as well.
Take $\G$ as in the proof of Theorem~\ref{thm:domino-c},
and define $\G'$ as above.
Now notice that in any tiling of~$\G$, there is no $2\times2\times1$ subregion of $\G$
that is tiled by two dominoes, which proves the result.
\end{proof}


\bigskip

\section{Final remarks and open problems}\label{s:fin}

\subsection{}
Historically, the tiling problems played a crucial role in the
developments of modern theoretical computer science.  The tileability
of the whole plane with a set of tiles was shown to be
undecidable~\cite{Ber,Rob}.  A version of the finite tileability problem
was stated to be \NP-complete in Levin's original paper~\cite{Lev},
where he (independently) defined \NP-completeness.  Finally,
Theorem~\ref{thm:domino} is one of the first few applications
of \SP-completeness, developed by Valiant~\cite{Val}.

\subsection{} \label{s:physics}
In the context of statistical physics, the domino tiling problem is
called the \emph{dimer problem}, and has a long history.  The classical
results of Fisher~\cite{Fis} and Kasteleyn~\cite{Kas} express the number
of domino tilings of finite regions in the plane as a certain Pfaffian,
equal to a square root of a determinant.  A closely related
\emph{monomer-dimer model} is \SP-complete already in the
plane~\cite{Jer} (see also~\cite{Vad}).  Let us mention that both
models can be polynomially approximated (see~\cite{JSV,KRS}).
We refer to~\cite{Tho} for graph-theoretic reasons precluding the
Pfaffian method in higher dimensions, and to~\cite{DG,HK} (see also~\cite{HN})
for the general hardness results on \emph{graph homomorphisms},
a concept generalizing perfect matchings.

\subsection{}
In the \emph{general tileability problem}, a set $\T$ of tiles is fixed, and
one considers tilings with parallel translations of copies of tiles $T \in \T$.
For a single tile~$T$, the tileability problem is a special case, where $\T$
consists of all reflections and rotations of~$T$.  Let us briefly elaborate
on the state of art of these tiling problems as they pertain to our results.

In the plane, when $|\T|=1$, \latin{i.e.}, when translates of a single tile are used,
the tileability problem is linear in the area.
However, already for $\T=\{2\times 1, 1\times 3\}$, the tileability is \NP-complete~\cite{BNRR}.
It is not known whether the corresponding counting problem is \SP-complete.
The smallest set~$\T$ for which \SP-completeness is known
is the set of four rotations of the $L$-tromino and a $[2\times 2]$ square~\cite{MR}.

For simply connected regions in the plane, the authors recently found
a set of $23$ Wang tiles, which they showed to be \NP-complete and
\SP-complete~\cite{PY}.  This construction can be further decreased to
$15$ tiles~\cite{Yang}, and the authors conjecture that only three rectangles
suffices (see~\cite{PY}).
In contrast to the decision problem, it is still unclear how much
the simple connectivity of regions helps counting the number of tilings,
other than making the reductions harder and more technical.

\subsection{} \label{ss:fin-val}
In~\cite{Val-alg},
Valiant's goal is to show the hardness of the number of perfect matchings of grid graphs,
defined as subgraphs (of the unit distance graph) of~$\zz^3$.
This is slightly weaker than the hardness of domino tilings of $3$-dim regions,
since the latter problem corresponds to \emph{induced} subgraphs of~$\zz^3$.
However, Valiant's proof can be slightly modified,
from subgraphs of $[2\times n \times n]$ to induced subgraphs of $[3\times n \times n]$,
to achieve the same goal.
The proof we present in Section~\ref{sec-domino} is in fact a variation of the argument in~\cite{Val-alg}.

\subsection{} \label{ss:augment}
For tiling in three dimensions, we consider the domino and the slab.
Here each tile has three distinct orientations.
If we only allow two out of the three orientations in either case,
each problem becomes equivalent to the ordinary domino tiling problem in two dimensions,
and hence is in~\P.
Of course, two orientations suffices for the tromino tile $I_3=[3\times1\times1]$
(or the $[3\times3\times1]$ tile) to guarantee \NP-completeness,
since we have \NP-completeness for the $[3\times1]$ tile.

Note that the \emph{augmentation approach}, mentioned in the introduction,
allows us to show the \NP-completeness of the tileability problem with~$I_3$
of contractible regions in~$\rr^3$.
To see this, start with a flat region and color the holes in a checkerboard fashion.
Now fill each unit cube with an up or down vertical tromino, depending on the color (see \fig{tro}).
The contractible region thus obtained is tileable with~$I_3$
if and only if the starting region is tileable by $[3\times1]$.

\begin{figure}[hbtp]
   \includegraphics[scale=0.47]{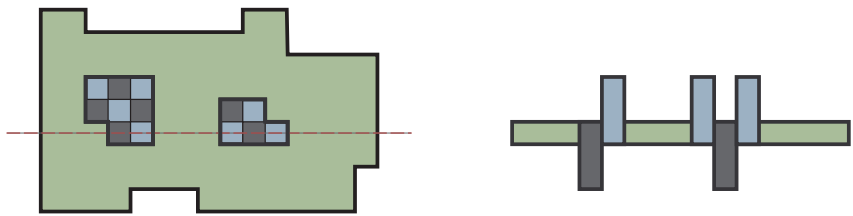}
   \caption{A construction of contractible $3$-dim regions for tromino tilings.}
   \label{fig:tro}
\end{figure}

\subsection{}
The conditions in Theorem~\ref{t:gen} are best possible.
Indeed, if $r=0$ or $r=d$, we are tiling with cubes of side length $1$ or~$2$, respectively,
both of which are linear in the volume.
The only remaining cases are the decision problem with $\cD_1^d$,
and the counting problem of ordinary dominoes in dimension $d=2$,
both of which are in~\P{} (see \latin{e.g.}~\cite{LP}).

\subsection{}
The idea of graph embedding into a grid is quite standard.
In this context of domino tilings it was used in~\cite{DKLM,Val-alg},
and for other small sets of tiles in~\cite{BNRR,MR,PY}.
Of course, the technical details are somewhat different in each case.

\subsection{}
It would be interesting to see if Theorem~\ref{thm:domino-c} holds for
contractible regions inside a $[2\times n \times n]$ brick,
as they have a rather simple geometric structure.
Our proof gives only a width~$4$ bound,
but we believe that a width~$3$ modification can be made without difficulty.
We should mention that for every fixed~$c$,
the counting problem is polynomial for regions inside a $[c\times c \times n]$ brick.

\subsection{}
In the plane, there are several other generalizations of domino tilings.
Notably, the first author introduced and studied \emph{ribbon tilings}
(see~\cite{pak-horizons}).  Another interesting set of generalized dominoes
$\T_n = \{2^k \times 2^{n-k}, 0 \le k \le n\}$ was studied in~\cite{Korn}.
Both sets satisfy the \emph{local move} property: every
two tilings of a s.c.~region~$\G$ can be obtained by a sequence
of \emph{flips}, each involving a fixed number of tiles
($2$~in both cases, see~\cite{pak-horizons}).
For the (usual) domino tilings this is a classical property going back to
Kasteleyn and famously proved by Thurston via the \emph{height functions}~\cite{Thu}.

Of course, for the domino tilings of general regions there is no local
move property, as large cycles can involve an unboundedly many tiles.
However, for contractible regions, one would naturally assume that domino
tilings and slab tilings are connected by flips corresponding to different
tilings of $[2\times 2 \times 2]$.  This is, in fact, false.   As an early
prequel to the constructions in the proofs of theorems~\ref{thm:slab-c}
and~\ref{thm:domino-c}, we state the following easy result.

\begin{prop}\label{c:move}
Domino tilings of contractible regions in~$\rr^3$
does not have the local move property.
The same result holds for slab tilings.
\end{prop}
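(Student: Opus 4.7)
The plan is to exhibit, for arbitrarily large $n$, a contractible region in $\rr^3$ with two slab tilings (resp.\ domino tilings) that are not connected by any sequence of $[2\times 2\times 2]$-flips. The strategy rests on the following observation: a tile covering a cube in a wire of thickness one in two of the three coordinate directions cannot be altered by any $[2\times 2\times 2]$-flip, simply because no such thin wire contains a $[2\times 2\times 2]$ subregion. Hence it suffices to construct a contractible region whose tiling set consists of exactly two elements differing on $\Theta(n)$ tiles lying along a thin cycle.

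For slab tilings, I would take a single \variable-gadget from the proof of Theorem~\ref{thm:slab-c}, standing in isolation: short-cap the $\pm z$ and $\pm y$ wires (say, by looping them back on themselves with their own small frozen filler regions), and extend the $\pm x$ wire loop to length $2n$, with the filler region~$Y$ (Figure~\ref{fig:4y}) enlarged accordingly to preserve contractibility. By Lemma~\ref{lem:variable} the gadget body admits exactly two local tilings, the two phases, and by Lemma~\ref{lem:wire} these propagate uniquely around the $\pm x$ loop and around the capped wires. The filler regions are frozen by the same inductive isolation argument as in Lemma~\ref{lem:4frozen}. Thus the region has exactly two slab tilings, differing on all $2n$ slabs of the $\pm x$ loop, and no sequence of $[2\times 2\times 2]$-flips can connect them.

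For domino tilings, I would perform an analogous construction: a thin rectangular wire loop of length $2n$ in $\Z^3$, together with a jagged plate (as in Figure~\ref{fig:2plate}) attached through a frozen bridge so that the resulting region is contractible. The loop admits exactly two tilings, corresponding to the two perfect matchings of the cycle graph on $2n$ vertices, the plate is uniquely tileable, and the bridge can be designed with jagged boundaries on both sides so that every cube in it is inductively isolated. Consequently the region has exactly two domino tilings, differing on all $2n$ dominoes of the loop, and these are separated by no sequence of $[2\times 2\times 2]$-flips.

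The main technical hurdle is verifying that the filler (the loop cap and bridge in the domino case, the enlarged $Y$ and wire caps in the slab case) is genuinely frozen and does not introduce any new $[2\times 2\times 2]$-flip within or adjacent to the thin cycle. This reduces to the same inductive removal of forced tiles used in Lemma~\ref{lem:4frozen} and in the analogous frozen-subregion arguments in Section~\ref{sec-domino}, so only careful bookkeeping is required to complete the proof.
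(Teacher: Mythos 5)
Your overall strategy is the same one the paper uses: exhibit, for arbitrarily large $n$, a contractible region with exactly two tilings that differ on $\Theta(n)$ tiles, so that no sequence of $[2\times2\times2]$ flips (each changing a bounded number of tiles, with no intermediate tilings available) can connect them. The paper realizes this with a simple explicit region (\fig{move}, in the spirit of \fig{tro}), while you try to recycle the gadget machinery of Theorems~\ref{thm:slab-c} and~\ref{thm:domino-c}. The domino half of your construction, however, has a genuine gap: a thin wire loop together with a plate ``attached through a frozen bridge'' is \emph{not} contractible. A loop with a contractible set glued on at a single bridge deformation retracts to a circle with a contractible blob attached, i.e.\ it is homotopy equivalent to $S^1$; the hole bounded by the loop is never filled, so the region fails the hypothesis of the proposition. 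To restore contractibility the plate must actually span the loop's hole (be adjacent to the loop all the way around, or cap it as a membrane), but then the loop cubes are adjacent to plate cubes along their length, the loop is no longer a proper wire, and neither ``the plate is frozen'' nor ``the loop has exactly two tilings'' is automatic any more --- dominoes could stick from the loop into the plate. Controlling exactly this interaction is what the jagged boundaries and tension lines in the proof of Theorem~\ref{thm:domino-c} are for, so the domino case needs to be redone, either by importing that analysis or by a direct construction as in \fig{move}.

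The slab half is closer to correct but is under-argued in the same places where the real work lies. Joining the $\pm y$ and $\pm z$ wires into loops is indeed consistent with both phases of the \variable-gadget (both endpoints have odd parity, so the wire has odd length and Lemma~\ref{lem:wire} gives opposite phases, exactly as for the $\pm x$ loop), so ``exactly two tilings'' is plausible; but each of the three loops bounds a hole that must be filled to keep contractibility, the point hole at the center of the gadget (cf.~\fig{0contract}, filled in the paper by the slab labeled~$4$ in \fig{4y}) must still be filled in your standalone version, and the frozenness of all these fillers has to be re-established without the plate $Z$ and the regions $X_k$ that drive the induction in Lemma~\ref{lem:4frozen}. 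You defer all of this to ``bookkeeping,'' but in your approach it is the entire content of the proof. Finally, your opening observation that a thin wire contains no $2\times2\times2$ box is neither sufficient (a flip box may straddle the wire and the adjacent filler or plate, which do touch the wire) nor needed: once the region is shown to have exactly two tilings differing on more tiles than a single flip can change, no flip sequence can connect them, which is the argument the paper's figure encodes directly.
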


The proof is apparent from \fig{move} (cf.~\fig{tro}).
In the first case, the middle dominoes alternate between up and down.  In the
second case, slabs are all vertical, with the middle slabs all one layer below.

\begin{figure}[hbtp]
   \includegraphics[scale=0.47]{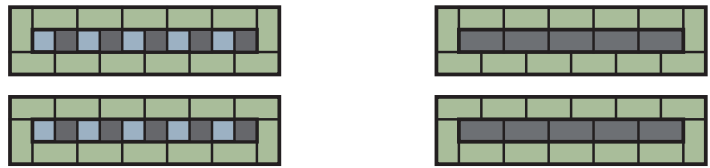}
   \caption[Cross sections of large regions in~$\rr^3$ with exactly two tilings.]
           {Cross sections of large regions in~$\rr^3$ with exactly two tilings, by dominoes and by slabs, respectively.}
   \label{fig:move}
\end{figure}

\vskip.7cm


\noindent
\textbf{Acknowledgements.} \,
We are very grateful to Cris Moore for helpful conversations; in particular,
Theorem~\ref{thm:slab} arose as a question in our discussions.
The first author is partially supported by the NSF and BSF grants.
The second author is supported by the NSF under Grant No.~DGE-0707424.

\newpage

\vskip1.1cm

\end{document}